\newtheorem{theorem}{Theorem}[section]
\newtheorem{proposition}[theorem]{Proposition}
\newtheorem{lemma}[theorem]{Lemma}
\newtheorem{corollary}[theorem]{Corollary}
\newcommand{\sm}{\left(\begin{smallmatrix}}
\newcommand{\esm}{\end{smallmatrix}\right)}
\theoremstyle{remark}
\newtheorem{remark}{Remark}
\newcommand{\SL}{\mathrm{SL}}
\newcommand{\Mp}{\mathrm{Mp}}
\begin{document}

\title[Zagier duality for harmonic weak Maass forms of integral weight]
{Zagier duality for harmonic weak Maass forms of integral weight}

\author{Bumkyu Cho}
\address{Department of Mathematics, Pohang University of Science and
Technology, San 31, Hyoja-dong, Nam-gu, Pohang-si, Gyeongsangbuk-do
790-784, Republic of Korea}

\email{bam@math.kaist.ac.kr}

\author{YoungJu Choie}

\address{Department of Mathematics, Pohang Mathematics Institute,
POSTECH, Pohang, Korea}

\email{yjc@postech.ac.kr}

\subjclass[2000]{Primary 11F11, 11F30; Secondary 11F37, 11F50}
\thanks{The first author is partially supported by BK21 at POSTECH
and Tae-Joon Park POSTECH Postdoctoral Fellowship and NRF
2010-0008426, and the second author partially supported by NRF20090083909 and NRF2009-0094069}

\keywords{}

\dedicatory{}

\begin{abstract}
We show the existence of ``Zagier duality'' between vector valued
harmonic weak Maass forms and vector valued weakly holomorphic
modular forms of integral weight. This duality phenomenon arises
naturally in the context of harmonic weak Maass forms as developed
in recent works by Bruinier, Funke, Ono, and Rhoades \cite{BF,BOR}.
Concerning the isomorphism between the spaces of scalar and vector
valued harmonic weak Maass forms of integral weight, Zagier duality
between scalar valued ones is derived.
\end{abstract}

\maketitle

\section{\bf{Introduction and statement of results}}

For an integer $k$, let  $M_{k +
\frac{1}{2}}^{!+}$ be the space of weakly holomorphic
modular forms $f(\tau)$ of weight $k + \frac{1}{2}$ for
$\Gamma_0(4)$ satisfying the Kohnen's plus space condition, that
is, $f(\tau)$ has a Fourier expansion of the form
\[ f(\tau) = \sum_{(-1)^k n \equiv 0, 1 \bmod 4} c_f(n) q^n. \]

For $d \geq 0$ with $d \equiv 0, 3 \bmod 4$ there is a unique
modular form $f_d \in M_{\frac{1}{2}}^{!+}$ having a
Fourier expansion of the form
\[ f_d(\tau) = q^{-d} + \sum_{D > 0} c_{f_d}(D) q^D. \]

On the other hand, for $D > 0$ with $D \equiv 0, 1 \bmod 4$ there
is also a unique modular form $g_D \in
M_{\frac{3}{2}}^{!+}$ having a Fourier expansion of
the form
\[ g_D(\tau) = q^{-D} + \sum_{d \geq 0} c_{g_D}(d) q^d. \]

As proven by Zagier \cite[Theorem 1]{Za} the $g_1(\tau)$ is
essentially the generating function for the traces of singular
moduli. He also proved the so-called ``Zagier duality''
\cite[Theorem 4]{Za} relating the Fourier coefficients of
$f_d(\tau)$ and $g_D(\tau)$:

\begin{theorem}(Zagier)\label{Za}
\[c_{f_d}(D) = -c_{g_D}(d) \quad \mbox{for all $D$ and $d$}. \]
\end{theorem}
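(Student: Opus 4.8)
The plan is to manufacture a single weight-$2$ object out of $f_d$ and $g_D$ and then extract the duality from its constant term. First I would form the product
\[ P(\tau) := f_d(\tau)\, g_D(\tau). \]
Since $f_d$ transforms with the weight-$\tfrac12$ theta multiplier $\nu$ and $g_D$ with $\nu^3$, the product transforms with $\nu^4=1$; hence $P$ is a weakly holomorphic modular form of weight $\tfrac12+\tfrac32=2$ for $\Gamma_0(4)$ with trivial character.

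Next I would read off the constant term of $P$ at $\infty$ from the two Fourier expansions. Writing $f_d = q^{-d}+\sum_{D'>0}c_{f_d}(D')q^{D'}$ and $g_D = q^{-D}+\sum_{d'\ge 0}c_{g_D}(d')q^{d'}$, the only pairs of exponents (one from each factor) summing to $0$ are $(-d,\,d)$ and $(D,\,-D)$, since $-d\le 0<D$ and the unique negative exponent of $g_D$ is $-D$. Hence the coefficient of $q^0$ in $P$ equals
\[ c_{g_D}(d)+c_{f_d}(D), \]
and the theorem becomes the single assertion that this constant term vanishes.

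To prove the vanishing I would pass to geometry: the weight-$2$ form $P$ yields a meromorphic differential $\omega = P(\tau)\,d\tau$ on the compact modular curve $X_0(4)$, holomorphic away from the cusps. By the residue theorem the total residue of $\omega$ is zero, and because $\omega$ can have poles only at the three cusps $\infty, 0, \tfrac12$, the residue at $\infty$ — which, using $d\tau = \tfrac{1}{2\pi i}\,\tfrac{dq}{q}$, is exactly $\tfrac{1}{2\pi i}\bigl(c_{g_D}(d)+c_{f_d}(D)\bigr)$ — is the negative of the residues at $0$ and $\tfrac12$. It therefore suffices to show that $P$ has vanishing constant term at those two cusps.

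The main obstacle is precisely this cusp analysis, and it is where the Kohnen plus-space hypothesis must be used decisively: the $f_d$ and $g_D$ are the distinguished basis elements whose principal part occurs only at $\infty$, and I would invoke the plus-space conditions to control their expansions at $0$ and $\tfrac12$ so that $P$ is holomorphic with no constant term there. (Equivalently, on a genus-zero curve with a single relevant cusp such a $P$ would be a $q\,\tfrac{d}{dq}$-derivative of a weight-$0$ form and automatically free of constant term; the level-$4$ situation with extra cusps is exactly what the plus space tames.) Granting the vanishing at $0$ and $\tfrac12$, the residue theorem forces $c_{g_D}(d)+c_{f_d}(D)=0$, that is $c_{f_d}(D)=-c_{g_D}(d)$ for all $D$ and $d$, as claimed.
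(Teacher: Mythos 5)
Your reduction of the theorem to the vanishing of the constant term of $P=f_d\,g_D$ at $\infty$ is correct (the multiplier computation and the identification of the contributing pairs $(-d,d)$ and $(D,-D)$ are both fine), but the step you defer --- that the plus-space condition forces the constant terms of $P$ at the cusps $0$ and $\tfrac12$ to vanish --- is not merely unproven, it is false. Take $d=0$, $D=1$, so that $f_0=\theta=1+2q+2q^4+\cdots$ and $g_1=q^{-1}-2+248q^3-492q^4+\cdots$. A plus-space form satisfies Kohnen's relation $f|_{k+\frac12}W_4=c\,(f|U_4)$ with $c\neq0$: writing $f(\tau)=h_0(4\tau)+h_1(4\tau)$ with $h_0=f|U_4$ (the theta decomposition underlying the plus space), the Fricke involution sends $f$ to a nonzero multiple of $h_0$. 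Consequently the expansion of $P$ at the cusp $0$ is a nonzero multiple of $(f_0|U_4)(g_1|U_4)=\theta(\tau)\cdot\bigl(-2-492q-\cdots\bigr)$, whose constant term is $-2\neq0$. So the residue of $P\,d\tau$ at the cusp $0$ does not vanish; it is cancelled by the residue at $\tfrac12$, not by being zero, and establishing that cancellation is precisely the nontrivial cusp computation your argument needs and does not supply. As written, the proof fails at its decisive step.

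The paper's route (indicated in Remark \ref{za}(2)) repairs this by changing the object rather than doing the cusp analysis: via the Eichler--Zagier isomorphism \cite[Theorems 5.1, 5.4]{EZ} one writes $f_d(\tau)=h_0(4\tau)+h_1(4\tau)$ and $g_D(\tau)=k_0(4\tau)+k_1(4\tau)$, where $(h_0,h_1)$ and $(k_0,k_1)$ are vector valued forms of weights $\tfrac12$ and $\tfrac32$ of mutually dual types for the rank-one lattice with $L'/L\cong\mathbb Z/2\mathbb Z$, and then applies Lemmas \ref{Lemma - observation} and \ref{Lemma - residue theorem}: the pairing $h_0k_0+h_1k_1$ is a weakly holomorphic form of weight $2$ for the full group $\SL_2(\mathbb Z)$, which has a \emph{single} cusp, so the residue theorem yields the vanishing of its constant term with no further input. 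Since the exponents of $f_d$ are $\equiv 0,1\pmod 4$ and those of $g_D$ are $\equiv 0,3\pmod 4$, the cross terms $h_0k_1$ and $h_1k_0$ contribute nothing to the constant term, so the constant term of the pairing is exactly the quantity $c_{f_d}(D)+c_{g_D}(d)$ you computed for the scalar product. In other words, the vector valued (equivalently, Jacobi-form) formulation replaces your three-cusp curve $X_0(4)$ by a one-cusp curve, which eliminates precisely the cusp contributions that break your argument.
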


\medskip

Zagier's results have inspired vast research subjects (for instance
see \cite{BO, DJ1, DJ2,  FO, Gu, Gu2, Kim, RO}) and have been extended to study duality properties  on the space of harmonic weak Maass forms
$H_k(\Gamma_0(N)).$  For instance, in terms of the weight (higher
weight) and space (weak Maass forms) aspects, K. Bringmann and K.
Ono \cite[Theorem 1.1]{BO} showed Zagier duality between certain
Maass-Poincar\'e series of weight $-k + 3/2$ and Poincar\'e series
of weight $k + 1/2$ for $\Gamma_0(4)$ with $k \geq 1$. In fact,
their Fourier coefficients are traces of singular moduli of certain
weight $0$ Maass forms. A. Folsom and K. Ono \cite{FO} found Zagier
duality between certain weight $1/2$ harmonic weak Maass forms and
weight $3/2$ weakly holomorphic modular forms on $\Gamma_0(144)$
with Nebentypus $(\frac{12}{\cdot})$. The holomorphic part
of their initial Maass form is essentially Ramanujan's mock theta
function $f(q)$. Also, C. H. Kim \cite{Kim} proved Zagier duality between
 certain weakly holomorphic modular forms of weight $1/2$ and $3/2$ on
$\Gamma_0(4p)$. Here $p$ is a prime such that the genus of the Fricke group
$\Gamma_0(p)^\ast$ equals $0$. In his result, those Fourier coefficients are essentially traces of singular moduli of the Hauptmodul $j_p^\ast(\tau)$ for
$\Gamma_0(p)^\ast$.

Now we introduce the results of integral weight cases. J. Rouse \cite[Theorem 1]{RO}
proved Zagier duality between certain weakly holomorphic modular forms
of weight $0$ and $2$ on $\Gamma_0(p)$ with Nebentypus $(\frac{\cdot}{p})$
where $p = 5, 13, 17$. D. Choi \cite[Theorem 1.2]{Choi}
gave a simple proof of Rouse's result by using the residue theorem, and extended
to any odd prime level $p$. Note that their results concern weakly holomorphic
modular forms. Recently P. Guerzhoy \cite[Theorem 1]{Gu} showed that there
is Zagier duality between certain harmonic weak Maass forms of weight $k$
and weakly holomorphic modular forms of weight $2 - k$ for $\SL_2(\mathbb Z)$ where $k \leq 0$ is even.

\medskip

The purpose of this paper is to derive Zagier duality for harmonic
weak Maass forms and, as a result, this   extends the previously
known Zagier duality between integral weight forms. To this end,
we first show that duality holds between the space $H_{k, \rho_L}$
of vector valued harmonic weak Maass forms and the space $M_{2-k,
\bar{\rho}_L}^!$ of vector valued weakly holomorphic modular forms
with $k \leq 0$ an integer (Theorem \ref{Theorem - Zagier duality
for vector valued modular forms}). This duality phenomenon arises
naturally in the context of harmonic weak Maass forms as developed
in recent works by Bruinier, Funke, Ono, and Rhoades \cite{BF,
BOR}. By taking $L$ in our result as an even unimodular lattice,
we immediately recover the recent result by Guerzhoy \cite{Gu}
(Corollary \ref{Corollary - Guerzhoy}). Now we take $L$ so that
 a certain space $H_{k}^{\epsilon}(\Gamma_0(p), (\frac{\cdot}{p}))$
of scalar valued harmonic weak Maass forms is isomorphic to $H_{k,
\rho_{L}}$ (Proposition \ref{Proposition - isomorphism for
integral weight case}). Then, as another corollary, for any odd
prime $p$ we have Zagier duality between $H_k^{\epsilon}
(\Gamma_0(p), (\frac{\cdot}{p}))$ and a certain  space $M_{2 -
k}^{!\delta} (\Gamma_0(p), (\frac{\cdot}{p}))$ of scalar valued
weakly holomorphic modular forms (Corollary \ref{Corollary -
Zagier duality for scalar valued modular forms}).

To state our main theorem let $L$ be a non-degenerate even lattice
of signature $(b^+, b^-)$, and $L'$ its dual lattice. We denote the
standard basis elements of the group algebra $\mathbb C[L' / L]$ by
$\mathfrak e_\gamma$ for $\gamma \in L'/L$. Let $\rho_L$ be the Weil
representation associated to the discriminant form $(L'/L, Q)$, and
$\bar{\rho}_L$ its dual representation. We write $M_{k, \rho_L}^!$
for the space of $\mathbb C[L'/L]$-valued weakly holomorphic modular
forms of weight $k$ and type $\rho_L$, and $H_{k, \rho_L}$ for the
space of $\mathbb C[L'/L]$-valued harmonic weak Maass forms of
weight $k$ and type $\rho_L$ (see Section 2). In the following
theorem we obtain Zagier duality between the space $H_{k, \rho_L}$
of vector valued harmonic weak Maass forms and the space $M_{2 - k,
\bar{\rho}_L}^!$ of vector valued weakly holomorphic modular forms
with $k \leq 0$ an integer.

\begin{theorem}\label{Theorem - Zagier duality for vector valued modular forms}
Let $L$ be a non-degenerate even lattice of signature $(b^+, b^-)$,
and $k \leq 0$ an integer such that $2k - b^+ + b^- \equiv 0 \bmod
2$. Let $\alpha, \beta \in L'/L$ and $m \in \mathbb Z - Q(\alpha)$,
$n \in \mathbb Z + Q(\beta)$ with $m, n > 0$. Then there exist
$f_{\alpha, m} \in H_{k, \rho_L}$ and $g_{\beta, n} \in M_{2 - k,
\bar{\rho}_L}^!$ with Fourier expansions of the form
\begin{eqnarray*}
f_{\alpha, m}(\tau) & = & f_{\alpha, m}^-(\tau) + q^{-m} \mathfrak
e_\alpha + (-1)^{k + \frac{b^- - b^+}{2}} q^{-m} \mathfrak
e_{-\alpha} + \sum_{\gamma \in L'/L} \sum_{l \in \mathbb Z +
Q(\gamma) \atop l
\geq 0} c_{f_{\alpha, m}}^+(\gamma, l) q^l \mathfrak e_\gamma \\
g_{\beta, n}(\tau) & = & q^{-n} \mathfrak e_\beta + (-1)^{k +
\frac{b^+ - b^-}{2}} q^{-n} \mathfrak e_{-\beta} + \sum_{\gamma \in
L'/L} \sum_{l \in \mathbb Z - Q(\gamma) \atop l > 0} c_{g_{\beta,
n}}(\gamma, l) q^l \mathfrak e_\gamma
\end{eqnarray*}
such that
\[ c_{f_{\alpha, m}}^+(\beta, n) = -c_{g_{\beta, n}}(\alpha, m). \]
If $k < 0$, then $f_{\alpha, m}$, $g_{\beta, n}$ are uniquely
determined.
\end{theorem}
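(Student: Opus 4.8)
The plan is to realize both families as Poincar\'e series and to extract the duality from a single Petersson inner product that I would evaluate in two different ways. First I would construct the forms. For $k\le 0$ the Maass--Poincar\'e series of index $(\alpha,m)$ and weight $k$ produces an element $f_{\alpha,m}\in H_{k,\rho_L}$ whose holomorphic part has the prescribed principal part $q^{-m}\mathfrak e_\alpha+(-1)^{k+(b^--b^+)/2}q^{-m}\mathfrak e_{-\alpha}$; the hypothesis $2k-b^++b^-\equiv 0\bmod 2$ is exactly the parity condition making the Weil representation compatible with this $\pm\alpha$ symmetric seed. For $g_{\beta,n}$ I would use the holomorphic Poincar\'e series of weight $2-k\ge 2$, or equivalently invoke existence of a weakly holomorphic form with a prescribed principal part: the obstruction to existence lies in the space of cusp forms of the dual weight $k\le 0$, which is trivial, and for $k<0$ the residual freedom in the constant term can be removed by subtracting a holomorphic form, producing the stated expansion with vanishing constant term.

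For the duality itself I would study the Petersson pairing of the cusp form $\xi_k f_{\alpha,m}\in S_{2-k,\bar\rho_L}$ against $g_{\beta,n}$. On one hand, since $\xi_k$ carries the Maass--Poincar\'e series $f_{\alpha,m}$ to a multiple of the weight-$(2-k)$ holomorphic Poincar\'e series of index $(\alpha,m)$, unfolding computes $(g_{\beta,n},\xi_k f_{\alpha,m})$ as an explicit multiple of the single coefficient $c_{g_{\beta,n}}(\alpha,m)$. On the other hand, the component contraction $\langle f_{\alpha,m},g_{\beta,n}\rangle$ pairing $\rho_L$ with $\bar\rho_L$ transforms like a weight-$2$ form for $\SL_2(\mathbb Z)$, so applying Stokes' theorem to $\langle f_{\alpha,m},g_{\beta,n}\rangle\,d\tau$ on the truncated fundamental domain identifies the same pairing with the constant term of the product. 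Computing that constant term and collapsing the $\pm\alpha,\pm\beta$ contributions through the symmetry $c(\gamma)=(-1)^{k+(b^--b^+)/2}c(-\gamma)$, it becomes a fixed linear combination of $c^+_{f_{\alpha,m}}(\beta,n)$ and $c_{g_{\beta,n}}(\alpha,m)$. Equating the two evaluations and bookkeeping the normalizing constants then yields $c^+_{f_{\alpha,m}}(\beta,n)=-c_{g_{\beta,n}}(\alpha,m)$; this is equivalent to comparing the explicit Kloosterman--Bessel Fourier expansions of the two Poincar\'e series directly.

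The step I expect to be the main obstacle is precisely the non-holomorphic part $f^-_{\alpha,m}$. Unlike the classical weakly holomorphic Zagier duality, here $\langle f_{\alpha,m},g_{\beta,n}\rangle$ is not holomorphic, so the bare residue theorem does \emph{not} force the constant term to vanish: the $\bar\partial$-derivative of the product reproduces exactly the Petersson density against $\xi_k f_{\alpha,m}$, and one checks that the incomplete-Gamma terms of $f^-_{\alpha,m}$ decay and so drop out of the limiting constant term while this $\xi_k$-contribution survives. Hence the pairing with $\xi_k f_{\alpha,m}$ cannot be discarded but must be evaluated through the unfolding above; tracking the resulting Gamma-factors, Kloosterman sums and signs as they pass through the $\xi$-operator and the two Petersson computations is where the real work lies.

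Finally, uniqueness for $k<0$ would follow by a standard argument. Two candidates with the same data differ by an element $d\in H_{k,\rho_L}$ with trivial principal part; then $\xi_k d$ is a cusp form of weight $2-k$ whose squared norm equals its principal-part pairing with $d$, which vanishes, so $\xi_k d=0$ and $d$ is holomorphic of weight $k<0$, forcing $d=0$ and hence uniqueness of $f_{\alpha,m}$. The established duality then determines every positive coefficient $c_{g_{\beta,n}}(\alpha,m)=-c^+_{f_{\alpha,m}}(\beta,n)$, which together with the prescribed principal part and vanishing constant term pins down $g_{\beta,n}$ as well. For $k=0$ the surviving weight-zero constants break this argument, which accounts for uniqueness being asserted only when $k<0$.
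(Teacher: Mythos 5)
Your Stokes evaluation of the pairing is correct --- it is exactly the paper's Proposition \ref{Proposition - regularized bilinear pairing}, and after collapsing the $\pm\alpha,\pm\beta$ terms it gives $\{ g_{\beta,n}, f_{\alpha,m} \}^{reg} = 2c^+_{f_{\alpha,m}}(\beta,n) + 2c_{g_{\beta,n}}(\alpha,m)$. But then the asserted duality is \emph{equivalent} to the vanishing of this regularized pairing, whereas your other evaluation claims it equals a nonzero explicit multiple of the single coefficient $c_{g_{\beta,n}}(\alpha,m)$: unfolding against $\xi_k f_{\alpha,m} = \kappa P_{2-k,\alpha,m}$ would give $\bar\kappa\, c_{g_{\beta,n}}(\alpha,m)\,\Gamma(1-k)/(4\pi m)^{1-k}$ with $\kappa \neq 0$ whenever $f_{\alpha,m}$ is not weakly holomorphic. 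Equating your two evaluations therefore yields $c^+_{f_{\alpha,m}}(\beta,n) = (\tfrac{C}{2}-1)\,c_{g_{\beta,n}}(\alpha,m)$ with $C \neq 0$, which is a proportionality with the wrong constant, not the duality; the constants cannot be "bookkept" into $-1$. The resolution is that termwise unfolding of the cusp-form Poincar\'e series against the exponentially growing $g_{\beta,n}$ is not legitimate for the regularized integral (there is no absolute convergence, and the limit over truncated domains picks up contributions from the shrinking neighborhoods of the cusps), so the naive Petersson coefficient formula is false in this setting. What is true --- and what the paper uses as its key input (Proposition \ref{Proposition - orthogonality}, i.e.\ \cite[Corollary 4.2]{BOR}) --- is that the pairing vanishes \emph{provided} $g_{\beta,n}$ lies in the image of $D^{1-k} = \tfrac{1}{(-4\pi)^{1-k}}R_k^{1-k}$; this orthogonality of $D^{1-k}(H_{k,\bar{\rho}_L})$ to cusp forms is proved by iterated integration by parts with the raising and lowering operators, not by unfolding.

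This exposes a second, related gap: your construction of $g_{\beta,n}$ (Borcherds obstruction theory, then adjusting the constant term) determines it only up to addition of a cusp form in $S_{2-k,\bar{\rho}_L}$, which is nonzero in general, and the duality holds for exactly one representative of that coset: if it held for both $g$ and $g+G$ with $G \in S_{2-k,\bar{\rho}_L}$, then $c_G(\alpha,m) = 0$ for all $(\alpha,m)$, forcing $G = 0$. So any proof must single out the correct representative, and yours does not. The paper does this by taking a \emph{second harmonic} form $h_{\beta,n} \in H_{k,\bar{\rho}_L}$ of weight $k$ with principal part $q^{-n}\mathfrak e_\beta + (-1)^{k+\frac{b^+-b^-}{2}}q^{-n}\mathfrak e_{-\beta}$ and setting $g_{\beta,n} := (-n)^{k-1}D^{1-k}h_{\beta,n}$, which by Bol's identity is weakly holomorphic of weight $2-k$, automatically has vanishing constant term (so the $k=0$ weight-$2$ constant-term problem in your construction never arises), and lies in the image of $D^{1-k}$ so that the orthogonality theorem applies. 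Your parenthetical alternative --- comparing the Kloosterman--Bessel expansions of the two families of Poincar\'e series directly --- is a genuinely different (Bringmann--Ono style) route that can be made to work for $k<0$, but it is not carried out here, needs analytic continuation of the weight-$2$ Poincar\'e series when $k=0$, and is not "equivalent" to the pairing argument as you set it up. Your uniqueness discussion for $k<0$ is fine once existence and the duality are actually established.
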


\begin{remark}
(1) We can include the case $n = 0$ by taking $g_{\beta, 0}$ as an
Eisenstein series $E_\beta$ (\cite[Theorem 1.6]{Br}).

(2) For $k < 0$ one can use \cite[Proposition 1.10]{Br} to construct
$f_{\alpha, m}$ and $g_{\beta, n}$ explicitly.
\end{remark}

If we take $L$ in Theorem \ref{Theorem - Zagier duality for vector
valued modular forms} as a unimodular one, we can recover Guerzhoy's
result \cite[Theorem 1]{Gu}.

\begin{corollary}[Guerzhoy]\label{Corollary - Guerzhoy}
For $m, n \in \mathbb Z_{>0}$ there exist $f_m \in H_k(\SL_2(\mathbb
Z))$, $g_n \in M_{2 - k}^!(\SL_2(\mathbb Z))$ with Fourier
expansions of the form
\begin{eqnarray*}
f_m(\tau) & = & f_m^-(\tau)
+q^{-m} + \sum_{l \in \mathbb Z_{\geq 0}} c_{f_m}^+(l) q^l \\
g_n(\tau) & = & q^{-n} + \sum_{l \in \mathbb Z_{> 0}} c_{g_n}(l) q^l
\end{eqnarray*}
such that
\[ c_{f_m}^+(n) = -c_{g_n}(m). \]
If $k < 0$, then $f_m$, $g_n$ are uniquely determined.
\end{corollary}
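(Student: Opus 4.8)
The plan is to deduce the corollary from Theorem \ref{Theorem - Zagier duality for vector valued modular forms} by specializing to an even unimodular lattice, where the vector valued theory collapses to the scalar theory on $\SL_2(\mathbb Z)$. First I would take $L$ to be any even unimodular lattice, for instance $L = E_8$ of signature $(8,0)$. Then $L' = L$, so the discriminant form $L'/L = \{0\}$ is trivial, the group algebra $\mathbb C[L'/L]$ is one dimensional with sole basis vector $\mathfrak e_0$, and $Q(0) = 0$. Since every even unimodular lattice has signature $b^+ - b^- \equiv 0 \bmod 8$, the integer $b^+ - b^-$ is even and the congruence $2k - b^+ + b^- \equiv 0 \bmod 2$ required in the theorem holds automatically for every integer $k$. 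Consequently the only admissible indices are $\alpha = \beta = 0$, and $m \in \mathbb Z - Q(0) = \mathbb Z$, $n \in \mathbb Z + Q(0) = \mathbb Z$ with $m, n > 0$, that is $m, n \in \mathbb Z_{>0}$, matching the hypotheses of the corollary.

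Next I would record that for such $L$ the Weil representation $\rho_L$ is trivial: $\rho_L(T)\mathfrak e_0 = e^{2\pi i Q(0)}\mathfrak e_0 = \mathfrak e_0$, while $\rho_L(S)\mathfrak e_0 = e^{2\pi i (b^- - b^+)/8}\mathfrak e_0 = \mathfrak e_0$ because $8 \mid b^+ - b^-$. Hence $\bar\rho_L$ is trivial as well, the weight is integral (as $b^+ + b^-$ is even), and the assignment $\phi(\tau)\mathfrak e_0 \mapsto \phi(\tau)$ furnishes linear isomorphisms $H_{k, \rho_L} \cong H_k(\SL_2(\mathbb Z))$ and $M_{2-k, \bar\rho_L}^! \cong M_{2-k}^!(\SL_2(\mathbb Z))$, since the single component function of a vector valued form of trivial type transforms in the standard weight-$k$ manner under $\SL_2(\mathbb Z)$.

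With these identifications I would invoke the theorem directly. When $\alpha = 0$ one has $-\alpha = \alpha = 0$, so the two leading terms of each Fourier expansion land in the same and only component $\mathfrak e_0$ and merge. Using $(-1)^{(b^- - b^+)/2} = 1$ (again from $8 \mid b^+ - b^-$), the relevant sign is $(-1)^k$; taking $k$ even as in Guerzhoy's setting, the principal part of $f_{0,m}$ becomes $(1 + (-1)^k)q^{-m}\mathfrak e_0 = 2q^{-m}\mathfrak e_0$, and likewise that of $g_{0,n}$ is $2q^{-n}\mathfrak e_0$. I would then set $f_m = \tfrac12 f_{0,m}$ and $g_n = \tfrac12 g_{0,n}$, regarded as scalar forms through the isomorphisms above, so that they carry the prescribed principal parts $q^{-m}$ and $q^{-n}$. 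The duality relation $c_{f_{0,m}}^+(0,n) = -c_{g_{0,n}}(0,m)$ of the theorem, multiplied through by the common factor $\tfrac12$, yields precisely $c_{f_m}^+(n) = -c_{g_n}(m)$. For $k < 0$, uniqueness of $f_m, g_n$ is inherited from the uniqueness of $f_{0,m}, g_{0,n}$ in the theorem together with the fact that the space identifications are bijective.

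The step to get right is the bookkeeping of the principal part: because $\alpha = 0$ is its own negative, the two leading terms of the vector valued form coincide in the component $\mathfrak e_0$, and one must check that for even $k$ they reinforce rather than cancel, then renormalize by $\tfrac12$ so the scalar forms acquire the clean principal parts $q^{-m}, q^{-n}$ while the sign in the duality is preserved. The genuine triviality of $\rho_L$, resting exactly on $8 \mid b^+ - b^-$ for even unimodular lattices, is what legitimizes the reduction from the vector valued setting to $\SL_2(\mathbb Z)$.
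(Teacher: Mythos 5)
Your proposal is correct and follows exactly the paper's route: the paper proves this corollary simply by remarking that one should take $L$ to be an even unimodular lattice in Theorem \ref{Theorem - Zagier duality for vector valued modular forms}, so that $L'/L$ is trivial, $\rho_L$ is trivial (using $b^+ - b^- \equiv 0 \bmod 8$), and vector valued forms become scalar forms on $\SL_2(\mathbb Z)$. Your write-up supplies the bookkeeping the paper leaves implicit (the merging of the $\mathfrak e_\alpha$ and $\mathfrak e_{-\alpha}$ terms for even $k$ and the harmless rescaling by $\tfrac12$), which is a faithful completion of, not a departure from, the paper's argument.
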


Of course we may take $L$ as another suitable lattice. Let $p$ be an
odd prime. We write $M_k^!(\Gamma_0(p), (\frac{\cdot}{p}))$ for the
space of weakly holomorphic modular forms of integral weight $k$ for
$\Gamma_0(p)$ with Nebentypus $(\frac{\cdot}{p})$. For $\epsilon \in
\{ \pm 1 \}$ we define the subspace
\[ M_k^{!\epsilon}(\Gamma_0(p), \big(\frac{\cdot}{p}\big))
:= \{ f = \sum_{n \in \mathbb Z} c_f(n) q^n \in M_k^!(\Gamma_0(p),
\big(\frac{\cdot}{p}\big)) \, | \, c_f(n) = 0 \mbox{ if }
\big(\frac{n}{p}\big) = - \epsilon \}. \]

Suppose that the discriminant group $L'/L$ is isomorphic to $\mathbb
Z / p\mathbb Z$. Then $b^+ - b^-$ is even, and the quadratic form on
$L'/L$ is equivalent to $Q(\gamma) = \frac{\lambda \gamma^2}{p}$ for
$\lambda, \gamma \in \mathbb Z / p \mathbb Z$ with $\lambda \neq 0$.

Now we put $\epsilon = (\frac{\lambda}{p})$, $\delta =
(\frac{-1}{p}) \epsilon$, and assume that $k \equiv (b^+ - b^-)/2
\bmod 2$. Then Bruinier and Bundschuh \cite[Theorem 5]{BB} showed
that the space $M_k^{!\epsilon}(\Gamma_0(p), (\frac{\cdot}{p}))$
(resp. $M_k^{!\delta}(\Gamma_0(p), (\frac{\cdot}{p}))$) of scalar
valued weakly holomorphic modular forms is isomorphic to the space
$M_{k, \rho_L}^!$ (resp. $M_{k, \bar{\rho}_L}^!$) of vector valued
ones.

We observe that Bruinier and Bundschuh's argument can also be
applied to the spaces of scalar and vector valued harmonic weak
Maass forms. Let $H_k (\Gamma_0(p), (\frac{\cdot}{p}))$ denote the
space of harmonic weak Maass forms of weight $k$ for $\Gamma_0(p)$
with Nebentypus $(\frac{\cdot}{p})$ (see Section 2). In particular
$f \in H_k(\Gamma_0(p), (\frac{\cdot}{p}))$ has a unique
decomposition $f = f^+ + f^-$, where
\begin{eqnarray*}
f^+(\tau) & = & \sum_{n \gg -\infty} c_f^+(n) q^n, \\
f^-(\tau) & = & \sum_{n < 0} c_f^-(n) \Gamma(1 - k, 4\pi |n| y) q^n.
\end{eqnarray*}
Here $\Gamma(a, y) = \int_y^\infty e^{-t} t^{a - 1} dt$ denotes the
incomplete Gamma function. For $\epsilon \in \{ \pm 1 \}$ we define
the subspace
\[ H_k^{\epsilon}(\Gamma_0(p), \big(\frac{\cdot}{p}\big))
:= \{ f \in H_k(\Gamma_0(p), \big(\frac{\cdot}{p}\big)) \, | \,
c_f^\pm(n) = 0 \mbox{ if } \big(\frac{n}{p}\big) = - \epsilon \}.
\]

For a given $f \in H_k(\Gamma_0(p), (\frac{\cdot}{p}))$ we define a
$\mathbb C[L'/L]$-valued function $F = \sum_{\gamma \in \mathbb Z /
p\mathbb Z} F_\gamma \mathfrak e_\gamma$ by
\begin{eqnarray*}
F_\gamma(\tau) := s(\gamma) \sum_{n \in \mathbb Z \atop n \equiv p
Q(\gamma) \bmod p} c_f(n, \frac{y}{p}) q^{n/p}.
\end{eqnarray*}
Here $c_f(n, y) := c_f^+(n) + c_f^-(n)\Gamma(1 - k, 4 \pi |n| y)$,
and for $l \in \mathbb Z$
\[ s(l) := \left\{ \begin{array}{ll} 2 & \mbox{if } l \equiv 0 \bmod
p, \\
1 & \mbox{otherwise.} \end{array} \right. \]

\begin{proposition}\label{Proposition - isomorphism for integral weight case}
Let $L$ be a non-degenerate even lattice of signature $(b^+, b^-)$
such that the discriminant group $L'/L$ is isomorphic to $\mathbb Z
/ p\mathbb Z$ with $p$ an odd prime. Put $\epsilon =
(\frac{\lambda}{p})$ and $\delta = (\frac{-1}{p}) \epsilon$. We
assume that $k \equiv \frac{b^+ - b^-}{2} \bmod 2$. Then the map $f
\mapsto F$ defines an isomorphism of $H_k^{\epsilon}(\Gamma_0(p),
(\frac{\cdot}{p}))$ onto $H_{k, \rho_L}$, and of
$H_k^{\delta}(\Gamma_0(p), (\frac{\cdot}{p}))$ onto $H_{k,
\bar{\rho}_L}$.
\end{proposition}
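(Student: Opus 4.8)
The plan is to follow the proof of Bruinier and Bundschuh \cite{BB} for weakly holomorphic modular forms and to verify that every step survives the passage to harmonic weak Maass forms. The assignment $f \mapsto F$ is linear and is prescribed entirely on Fourier coefficients: the $\gamma$-component of $F$ gathers exactly those coefficient functions $c_f(n, \cdot)$ of $f$ with $n \equiv pQ(\gamma) \bmod p$, rescaled by $y \mapsto y/p$ and weighted by $s(\gamma)$. It therefore has an evident candidate inverse --- from a vector form $(F_\gamma)_\gamma$ one recovers, for each $n$, the datum $c_f(n, \cdot)$ by reading the single component $F_\gamma$ with $pQ(\gamma) \equiv n \bmod p$, dividing by $s(\gamma)$ and undoing the rescaling. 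Consequently injectivity and surjectivity reduce to the two statements that $f \mapsto F$ sends $H_k^{\epsilon}(\Gamma_0(p), (\frac{\cdot}{p}))$ into $H_{k, \rho_L}$ and that the inverse formula sends $H_{k, \rho_L}$ back into it; the assertion for $\delta$ and $\bar{\rho}_L$ will follow by the same computation after complex conjugation. The whole matter thus splits into an analytic part (harmonicity and growth) and an automorphy part (the transformation law).

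First I would dispatch the analytic conditions, which are componentwise and essentially automatic, because the definition is arranged so that the incomplete-Gamma argument and the exponential rescale compatibly. Indeed, in the $\gamma$-component the contribution of an index $n$ is
\[ c_f^-(n)\,\Gamma\!\left(1 - k,\, 4\pi|n|\tfrac{y}{p}\right) q^{n/p} = c_f^-(n)\,\Gamma\!\left(1 - k,\, 4\pi\left|\tfrac{n}{p}\right| y\right) q^{n/p}, \]
which is precisely a weight-$k$ harmonic building block of frequency $n/p$. Hence each $F_\gamma$ has a Fourier expansion of harmonic weak Maass type in $q^{1/p}$, so that $F_\gamma$ is annihilated by the weight-$k$ hyperbolic Laplacian $\Delta_k$, and the growth condition at the cusps is inherited from that of $f$. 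The same exponent-matching shows that the inverse formula returns a function whose holomorphic and non-holomorphic pieces assemble into a decomposition $f = f^+ + f^-$ of the prescribed shape.

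The crux, and the step I expect to be the main obstacle, is the transformation law under the Weil representation, which it suffices to check on the generators $T = \sm 1 & 1 \\ 0 & 1 \esm$ and $S = \sm 0 & -1 \\ 1 & 0 \esm$ of $\SL_2(\mathbb Z)$. Writing $e(x) := e^{2\pi i x}$, the relation $\rho_L(T)\mathfrak e_\gamma = e(Q(\gamma))\mathfrak e_\gamma$ is immediate from the congruence $n \equiv pQ(\gamma) \bmod p$, since then $e(n/p) = e(Q(\gamma))$. The substantive computation is the action of $S$, for which I would reproduce the quadratic Gauss-sum manipulation of \cite{BB}: one expands $\rho_L(S)$ through $\tfrac{e((b^- - b^+)/8)}{\sqrt p}\sum_{\delta} e(-(\gamma, \delta))$ and matches the resulting reshuffling of components against the transformation of the scalar $f$ under $S$ (equivalently, against the Fricke involution on $\Gamma_0(p)$), the plus-type selection rule being supplied by $\epsilon = (\frac{\lambda}{p})$. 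The parity hypothesis $k \equiv \tfrac{b^+ - b^-}{2} \bmod 2$ is exactly what renders the weight-$k$ automorphy factor compatible with the action of $\rho_L(-I)$ and with the symmetry $F_\gamma = F_{-\gamma}$ (valid because $Q(\gamma) = Q(-\gamma)$).

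What makes this transfer of Bruinier--Bundschuh's argument succeed verbatim is that their computation never invokes holomorphy: throughout, the coefficient of $q^n$ is treated as an opaque scalar that is merely permuted and multiplied by roots of unity and Gauss-sum factors as $\tau$ is transformed. Replacing that scalar $c_f(n)$ by the coefficient function $c_f(n, y) = c_f^+(n) + c_f^-(n)\Gamma(1 - k, 4\pi|n|y)$ leaves the formal manipulation untouched, the non-holomorphic factor riding along precisely because, by the matching above, it is itself a weight-$k$ harmonic term transforming in the same way. Finally, for the $\epsilon$-condition on the inverse map I observe that the residues $n \bmod p$ appearing in the components are exactly $0$ together with $\lambda$ times the nonzero squares; hence a nonzero $c_f^\pm(n)$ forces $(\frac{n}{p}) = (\frac{\lambda}{p}) = \epsilon$, so that $c_f^\pm(n) = 0$ whenever $(\frac{n}{p}) = -\epsilon$, which is membership in $H_k^{\epsilon}(\Gamma_0(p), (\frac{\cdot}{p}))$. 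Replacing $\rho_L$ by its dual turns $\epsilon$ into $\delta = (\frac{-1}{p})\epsilon$ and yields the second isomorphism.
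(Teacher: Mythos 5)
Your proposal is correct in outline and shares the paper's top-level strategy: use Bruinier--Bundschuh's isomorphism as the skeleton, argue that the automorphy part of their proof transfers to harmonic weak Maass forms, and supply the new analytic input. But the mechanism you use for that analytic input is genuinely different from the paper's, and the comparison is instructive. The paper never argues componentwise on Fourier expansions; instead it invokes two explicit slash-operator identities from \cite{BB}, namely the coset-sum expression
\[ F = i^{\frac{b^+ - b^-}{2}} p^{\frac{k - 1}{2}} \sum_{M \in \Gamma_0(p) \backslash \SL_2(\mathbb Z)} (\rho_L(M)^{-1} \mathfrak e_0)\, f|_k W_p |_k M \]
for the forward map and $f = \frac{i^{(b^+ - b^-)/2}}{2} p^{(1 - k)/2} F_0 |_k W_p$ for the inverse, and then uses a single fact --- $\Delta_k$ commutes with the Petersson slash operator \cite{Ma} --- to conclude $\Delta_k F = 0$ and $\Delta_k f = 0$ in both directions in two lines. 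Your componentwise verification (each term $c_f^-(n)\Gamma(1-k, 4\pi|n|y/p)\,q^{n/p} = c_f^-(n)\Gamma(1-k, 4\pi|n/p|\,y)\,q^{n/p}$ is a weight-$k$ harmonic building block of frequency $n/p$) is also valid, modulo the routine justification for applying $\Delta_k$ term by term to the series, and it is more elementary and self-contained. Where the paper's route is safer is precisely at your weakest step: the heuristic that in Bruinier--Bundschuh's computation the coefficient of $q^n$ is an ``opaque scalar merely permuted and multiplied by roots of unity as $\tau$ is transformed'' is not literally true for the $S$-transformation, under which $y \mapsto y/|\tau|^2$, so the coefficient functions $c_f(n,y)$ do not ride along term by term; what actually transfers are whole-function slash-operator identities of the displayed type (together with the $U_p$--$W_p$ encoding of the plus-space condition), which make sense for arbitrary smooth functions and interact with $\Delta_k$ only through the commutation fact. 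On the other hand, your write-up makes explicit two points the paper leaves silent: the quadratic-residue bookkeeping showing that image and preimage satisfy the $\epsilon$- and $\delta$-plus-space conditions, and where the parity hypothesis $k \equiv (b^+ - b^-)/2 \bmod 2$ enters.
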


\begin{remark}
A similar result for half integral weight case can be found in
\cite{CC}.
\end{remark}

As another corollary of Theorem \ref{Theorem - Zagier duality for
vector valued modular forms} combining with Proposition
\ref{Proposition - isomorphism for integral weight case} we obtain
Zagier duality between the space $H_k^{\epsilon} (\Gamma_0(p),
(\frac{\cdot}{p}))$ of scalar valued harmonic weak Maass forms and
the space $M_{2 - k}^{!\delta} (\Gamma_0(p), (\frac{\cdot}{p}))$ of
scalar valued weakly holomorphic modular forms.

\begin{corollary}\label{Corollary - Zagier duality for scalar valued modular forms}
With the notation and assumption as in Proposition \ref{Proposition
- isomorphism for integral weight case}, we further assume that $k
\leq 0$. For $m, n \in \mathbb Z_{>0}$ with $(\frac{-m}{p}) \neq
-\epsilon$ and $(\frac{-n}{p}) \neq -\delta$, there exist $f_m \in
H_k^{\epsilon} (\Gamma_0(p), (\frac{\cdot}{p}))$ and $g_n \in M_{2 -
k}^{!\delta} (\Gamma_0(p), (\frac{\cdot}{p}))$ with Fourier
expansions of the form
\begin{eqnarray*}
f_m(\tau) & = & f_m^-(\tau) + q^{-m} + \sum_{l \geq 0} c_{f_m}^+(l)
q^l, \\
g_n(\tau) & = &q^{-n} + \sum_{l > 0} c_{g_n}(l) q^l
\end{eqnarray*}
such that
\[ s(n)c_{f_m}^+(n) = -s(m) c_{g_n}(m). \]
If $k < 0$, then $f_m$, $g_n$ are uniquely determined.
\end{corollary}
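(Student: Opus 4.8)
The plan is to deduce the scalar statement from Theorem \ref{Theorem - Zagier duality for vector valued modular forms} by transporting everything through the isomorphisms of Proposition \ref{Proposition - isomorphism for integral weight case} (for the $H$-spaces) and of Bruinier--Bundschuh \cite[Theorem 5]{BB} (for the $M^!$-spaces). Recall that on $L'/L \cong \mathbb Z/p\mathbb Z$ the quadratic form is $Q(\gamma) = \lambda\gamma^2/p$, while the dual form attached to $\bar\rho_L$ is $-Q$. First I would translate the input data: given $m,n \in \mathbb Z_{>0}$ I would set $\mathfrak m = m/p$, $\mathfrak n = n/p$ and choose $\alpha, \beta \in \mathbb Z/p\mathbb Z$ with $\lambda\alpha^2 \equiv -m \bmod p$ and $\lambda\beta^2 \equiv n \bmod p$. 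The hypotheses $(\frac{-m}{p}) \neq -\epsilon$ and $(\frac{-n}{p}) \neq -\delta$ are precisely the solvability conditions for these two congruences: since $\epsilon = (\frac{\lambda}{p})$ and $\delta = (\frac{-1}{p})\epsilon$, one checks $(\frac{-m\lambda^{-1}}{p}) = (\frac{-m}{p})\epsilon \neq -1$ and $(\frac{n\lambda^{-1}}{p}) = (\frac{n}{p})\epsilon \neq -1$. A short computation gives $\mathfrak m \in \mathbb Z - Q(\alpha)$ and $\mathfrak n \in \mathbb Z + Q(\beta)$, both positive, so Theorem \ref{Theorem - Zagier duality for vector valued modular forms} produces $f_{\alpha,\mathfrak m} \in H_{k,\rho_L}$ and $g_{\beta,\mathfrak n} \in M_{2-k,\bar\rho_L}^!$ with $c_{f_{\alpha,\mathfrak m}}^+(\beta,\mathfrak n) = -c_{g_{\beta,\mathfrak n}}(\alpha,\mathfrak m)$.

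Next I would define the scalar forms as the preimages $f_m$ and $g_n$ of $f_{\alpha,\mathfrak m}$ and $g_{\beta,\mathfrak n}$ under the respective isomorphisms, and read off their Fourier expansions from the formula $F_\gamma(\tau) = s(\gamma)\sum_{n' \equiv pQ(\gamma)} c_f(n', y/p) q^{n'/p}$ (and its $\bar\rho_L$-analogue with $pQ(\gamma)$ replaced by $-\lambda\gamma^2$). The key observation is that the assumption $k \equiv (b^+-b^-)/2 \bmod 2$ forces the two signs $(-1)^{k+(b^--b^+)/2}$ and $(-1)^{k+(b^+-b^-)/2}$ appearing in the theorem to equal $+1$, so that the principal parts of $f_{\alpha,\mathfrak m}$ and $g_{\beta,\mathfrak n}$ are symmetric under $\gamma \mapsto -\gamma$; this is exactly the symmetry carried by every form in the image of the isomorphism, so the preimages are genuinely defined. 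Reading the $\mathfrak e_\alpha$-component shows that the coefficient of $q^{-\mathfrak m}$ there equals $s(\alpha)$ (it is $1$ when $\alpha \neq 0$ and $2$ when $\alpha = 0$, because then $\mathfrak e_\alpha$ and $\mathfrak e_{-\alpha}$ coincide), which is cancelled by the prefactor $s(\alpha)$ in the map; hence $c_{f_m}^+(-m) = 1$, and $f_m$ has principal part $q^{-m}$. The same computation gives $g_n$ with principal part $q^{-n}$, while membership in $H_k^\epsilon$, resp.\ $M_{2-k}^{!\delta}$, is automatic from the isomorphisms.

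Finally I would extract the duality. Noting that $\alpha = 0 \iff p \mid m$ and $\beta = 0 \iff p \mid n$, we have $s(\alpha) = s(m)$ and $s(\beta) = s(n)$. Applying the coefficient formula at the relevant exponents gives $c_{f_{\alpha,\mathfrak m}}^+(\beta,\mathfrak n) = s(\beta) c_{f_m}^+(n) = s(n) c_{f_m}^+(n)$ and $c_{g_{\beta,\mathfrak n}}(\alpha,\mathfrak m) = s(\alpha) c_{g_n}(m) = s(m) c_{g_n}(m)$; substituting into the vector-valued identity yields $s(n) c_{f_m}^+(n) = -s(m) c_{g_n}(m)$. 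When $k<0$ the uniqueness of $f_{\alpha,\mathfrak m}$, $g_{\beta,\mathfrak n}$ together with the bijectivity of the isomorphisms gives uniqueness of $f_m$, $g_n$. I expect the only real friction to be the careful bookkeeping of the $s(\gamma)$ factors together with the collapse $\mathfrak e_\alpha = \mathfrak e_{-\alpha}$ at $\gamma = 0$, and the verification that the parity hypothesis makes the relevant signs trivial so that the two principal terms reinforce rather than cancel; once these are pinned down the $s(n)$ and $s(m)$ in the statement drop out automatically.
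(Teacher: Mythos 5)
Your proposal is correct and follows exactly the route the paper intends: the paper states this corollary as an immediate consequence of Theorem \ref{Theorem - Zagier duality for vector valued modular forms} combined with Proposition \ref{Proposition - isomorphism for integral weight case} and the Bruinier--Bundschuh isomorphism for the weakly holomorphic spaces, leaving the details implicit, and your write-up supplies precisely those details. Your index translation (solvability of $\lambda\alpha^2 \equiv -m$, $\lambda\beta^2 \equiv n \bmod p$ via the Legendre-symbol hypotheses), the observation that the parity assumption makes both signs $+1$, and the bookkeeping of the $s(\gamma)$ factors against the collapse $\mathfrak e_\alpha = \mathfrak e_{-\alpha}$ at $\alpha = 0$ are all accurate.
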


\begin{remark}
(1) In \cite{Gu} Guerzhoy called such a pair $(f_m, g_n)$ a
``grid''.

(2) This extends the result given by Rouse \cite[Theorem 1]{RO}.
\end{remark}

\section{\bf{Preliminaries}}

\subsection{Scalar valued modular forms}

Let $\tau = x + iy \in \mathbb H$, the complex upper half plane,
with $x, y \in \mathbb R$. Let $k \in \mathbb Z$, $N$ a positive
integer, and $\chi$ a Dirichlet character modulo $N$.

Recall that \textit{weakly holomorphic modular forms of weight $k$
for $\Gamma_0(N)$ with Nebentypus $\chi$} are holomorphic functions
$f : \mathbb H \rightarrow \mathbb C$ which satisfy:
\begin{enumerate}[(i)]
\item For all $(\begin{smallmatrix} a & b \\ c & d \end{smallmatrix})
\in \Gamma_0(N)$ we have
\[ f\Big(\frac{a\tau + b}{c\tau + d}\Big) = \chi(d)
(c\tau + d)^k f(\tau); \]

\item $f$ has a Fourier expansion of the form
\[ f(\tau) = \sum_{n \in \mathbb Z \atop n \gg -\infty} c_f(n) q^n,
\] and analogous conditions are required at all cusps. Here $q = e^{2 \pi i \tau}$ as
usual.
\end{enumerate}
We write $M_k^!(\Gamma_0(N), \chi)$ for the space of these weakly
holomorphic modular forms.

A smooth function $f : \mathbb H \rightarrow \mathbb C$ is called a
\textit{harmonic weak Maass form of weight $k$ for $\Gamma_0(N)$
with Nebentypus $\chi$} if it satisfies:
\begin{enumerate}[(i)]
\item For all $(\begin{smallmatrix} a & b \\ c & d \end{smallmatrix})
\in \Gamma_0(N)$ we have
\[ f\Big(\frac{a\tau + b}{c\tau + d}\Big) = \chi(d)
(c\tau + d)^k f(\tau); \]

\item $\Delta_k f = 0$, where $\Delta_k$ is the weight
$k$ hyperbolic Laplace operator defined by
\[ \Delta_k := -y^2 \Big( \frac{\partial^2}{\partial x^2} +
\frac{\partial^2}{\partial y^2} \Big) + i k y \Big(
\frac{\partial}{\partial x} + i\frac{\partial}{\partial y} \Big);
\]

\item There is a Fourier polynomial $P_f(\tau) = \sum_{-\infty \ll
n \leq 0} c_f^+(n) q^n \in \mathbb C[q^{-1}]$ such that $f(\tau) =
P_f(\tau) + O(e^{-\varepsilon y})$ as $y \rightarrow \infty$ for
some $\varepsilon > 0$. Analogous conditions are required at all
cusps.
\end{enumerate}
We denote the space of these harmonic weak Maass forms by
$H_k(\Gamma_0(N), \chi)$. This space can be denoted by
$H_k^+(\Gamma_0(N), \chi)$ in the context of \cite{BF}. Here we
follow notation given in \cite{BOR}. Then the space
$H_k(\Gamma_0(N), \chi)$ contains the space $M_k^!(\Gamma_0(N),
\chi)$ of weakly holomorphic modular forms of weight $k$ for
$\Gamma_0(N)$ with Nebentypus $\chi$. The polynomial $P_f \in
\mathbb C[q^{-1}]$ is called the principal part of $f$ at the
corresponding cusps. In particular $f \in H_k(\Gamma_0(N), \chi)$
has a unique decomposition $f = f^+ + f^-$, where
\begin{eqnarray*}
f^+(\tau) & = & \sum_{n \gg -\infty} c_f^+(n) q^n, \\
f^-(\tau) & = & \sum_{n < 0} c_f^-(n) \Gamma(1 - k, 4\pi |n| y) q^n.
\end{eqnarray*}

\subsection{Vector valued modular forms}

We write $\Mp_2(\mathbb R)$ for the metaplectic two-fold cover of
$\SL_2(\mathbb R)$. The elements are pairs $(M, \phi)$, where $M =
(\begin{smallmatrix} a & b \\ c & d
\end{smallmatrix}) \in \SL_2(\mathbb R)$ and $\phi : \mathbb H
\rightarrow \mathbb C$ is a holomorphic function with $\phi(\tau)^2
= c\tau + d$. The multiplication is defined by
\[ (M, \phi(\tau)) (M', \phi'(\tau)) = (MM', \phi(M'\tau)
\phi'(\tau)). \] For $M = (\begin{smallmatrix} a & b \\ c & d
\end{smallmatrix}) \in \SL_2(\mathbb R)$ we use the notation
$\tilde{M} := ((\begin{smallmatrix} a & b \\ c & d
\end{smallmatrix}), \sqrt{c\tau + d}) \in \Mp_2(\mathbb R)$. We
denote by $\Mp_2(\mathbb Z)$ the integral metaplectic group, that is
the inverse image of $\SL_2(\mathbb Z)$ under the covering map
$\Mp_2(\mathbb R) \rightarrow \SL_2(\mathbb R)$. It is well known
that $\Mp_2(\mathbb Z)$ is generated by $T := \left(
(\begin{smallmatrix} 1 & 1 \\ 0 & 1 \end{smallmatrix}), 1 \right)$
and $S := \left( (\begin{smallmatrix} 0 & -1 \\ 1 & 0
\end{smallmatrix}), \sqrt{\tau} \right)$.

Let $(V, Q)$ be a non-degenerate rational quadratic space of
signature $(b^+, b^-)$. Let $L \subset V$ be an even lattice with
dual $L'$. We denote the standard basis elements of the group
algebra $\mathbb C[L' / L]$ by $\mathfrak e_\gamma$ for $\gamma
\in L'/L$, and write $\langle \cdot, \cdot \rangle$ for the
standard scalar product, antilinear in the second entry, such that
$\langle \mathfrak e_\gamma, \mathfrak e_{\gamma'} \rangle =
\delta_{\gamma, \gamma'}$. There is a unitary representation
$\rho_L$ of $\Mp_2(\mathbb Z)$ on $\mathbb C[L'/L]$,  called the
Weil representation, which is defined by
\begin{eqnarray*}
\rho_L(T) (\mathfrak e_\gamma) & := & e(Q(\gamma)) \mathfrak
e_\gamma, \\
\rho_L(S) (\mathfrak e_\gamma) & := & \frac{e((b^- -
b^+)/8)}{\sqrt{|L'/L|}} \sum_{\delta \in L'/L} e(-(\gamma, \delta))
\mathfrak e_\delta,
\end{eqnarray*}
where $e(z) := e^{2 \pi i z}$ and $(X, Y) := Q(X + Y) - Q(X) - Q(Y)$
is the associated bilinear form. We denote by $\bar{\rho}_L$ the
dual representation of $\rho_L$.

Let $k \in \frac{1}{2}\mathbb Z$. A holomorphic function $f :
\mathbb H \rightarrow \mathbb C[L'/L]$ is called a \textit{weakly
holomorphic modular form of weight $k$ and type $\rho_L$ for the
group $\Mp_2(\mathbb Z)$} if it satisfies:
\begin{enumerate}[(i)]
\item $f(M\tau) = \phi(\tau)^{2k} \rho_L(M, \phi) f(\tau)$ for all
$(M, \phi) \in \Mp_2(\mathbb Z)$;

\item $f$ is meromorphic at the cusp $\infty$.
\end{enumerate}
Here condition (ii) means that $f$ has a Fourier expansion of the
form
\[ f(\tau) = \sum_{\gamma \in L'/L} \sum_{n \in \mathbb Z + Q(\gamma)
\atop n \gg -\infty} c_f(\gamma, n) e(n\tau) \mathfrak e_\gamma. \]
The space of these $\mathbb C[L'/L]$-valued weakly holomorphic
modular forms is denoted by $M_{k, \rho_L}^!$. Similarly we can
define the space $M_{k, \bar{\rho}_L}^!$ of $\mathbb C[L'/L]$-valued
weakly holomorphic modular forms of type $\bar{\rho}_L$. The
subspace of $\mathbb C[L'/L]$-valued holomorphic modular forms
(resp. cusp forms) of weight $k$ and type $\rho_L$ is denoted by
$M_{k, \rho_L}$ (resp. $S_{k, \rho_L}$).

A smooth function $f : \mathbb H \rightarrow \mathbb C[L'/L]$ is
called a \textit{harmonic weak Maass form of weight $k$ and type
$\rho_L$ for the group $\Mp_2(\mathbb Z)$} if it satisfies:
\begin{enumerate}[(i)]
\item $f(M\tau) = \phi(\tau)^{2k} \rho_L(M, \phi) f(\tau)$ for all
$(M, \phi) \in \Mp_2(\mathbb Z)$;

\item $\Delta_k f = 0$;

\item There is a Fourier polynomial $P_f(\tau) = \sum_{\gamma \in L'/L}
\sum_{n \in \mathbb Z + Q(\gamma) \atop -\infty \ll n \leq 0}
c_f^+(\gamma, n) e(n\tau) \mathfrak e_\gamma$ such that $f(\tau) =
P_f(\tau) + O(e^{-\varepsilon y})$ as $y \rightarrow \infty$ for
some $\varepsilon > 0$.
\end{enumerate}
We denote by $H_{k, \rho_L}$ the space of these $\mathbb
C[L'/L]$-valued harmonic weak Maass forms. This space is denoted by
$H_{k, L}^+$ in \cite{BF}. We have $M_{k, \rho_L}^! \subset H_{k,
\rho_L}$. Similarly we define the space $H_{k, \bar{\rho}_L}$. In
particular $f \in H_{k, \rho_L}$ has a unique decomposition $f = f^+
+ f^-$, where
\begin{eqnarray*}
f^+(\tau) & = & \sum_{\gamma \in L'/L} \sum_{n \in \mathbb Z + Q(\gamma)
\atop n \gg -\infty} c_f^+(\gamma, n) e(n\tau) \mathfrak e_\gamma, \\
f^-(\tau) & = & \sum_{\gamma \in L'/L} \sum_{n \in \mathbb Z +
Q(\gamma) \atop n < 0} c_f^-(\gamma, n) \Gamma(1 - k, 4\pi |n| y)
e(n\tau) \mathfrak e_\gamma.
\end{eqnarray*}

\subsection{Zagier duality for weakly holomorphic modular
forms}\label{Zagierduality}

We begin with the following rather simple observation.

\begin{lemma}\label{Lemma - observation}
Let $k, k' \in \frac{1}{2}\mathbb Z$ with $k + k' \in \mathbb Z$,
and
\begin{eqnarray*}
f & = & \sum_{\gamma \in L'/L} f_\gamma \mathfrak e_\gamma \in
M_{k, \rho_L}^!, \\
g & = & \sum_{\gamma \in L'/L} g_\gamma \mathfrak e_\gamma \in
M_{k', \bar{\rho}_L}^!.
\end{eqnarray*}
Then $\sum_{\gamma\in L'/L} f_\gamma g_\gamma \in M_{k +
k'}^!(\SL_2(\mathbb Z))$, i.e. a weakly holomorphic elliptic modular
form of weight $k + k'$.
\end{lemma}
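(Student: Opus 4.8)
The plan is to realize the scalar product $h := \sum_{\gamma \in L'/L} f_\gamma g_\gamma$ as the value of a canonical bilinear pairing that is invariant under the combined action of $\rho_L$ and $\bar{\rho}_L$, and then to read off the transformation law, holomorphy, and cusp behaviour of $h$ directly. First I would introduce the $\mathbb C$-bilinear pairing $B(u, v) := \sum_{\gamma \in L'/L} u_\gamma v_\gamma$ for $u = \sum_\gamma u_\gamma \mathfrak e_\gamma$ and $v = \sum_\gamma v_\gamma \mathfrak e_\gamma$. This is \emph{not} the Hermitian product $\langle \cdot, \cdot \rangle$ (there is no complex conjugation), a distinction that matters because the components $g_\gamma$ are holomorphic and must not be conjugated. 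By construction $h(\tau) = B(f(\tau), g(\tau))$.

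The key step is the invariance $B(\rho_L(M, \phi) u, \bar{\rho}_L(M, \phi) v) = B(u, v)$ for all $(M, \phi) \in \Mp_2(\mathbb Z)$. Writing $A := \rho_L(M, \phi)$ as a matrix in the basis $(\mathfrak e_\gamma)$, the pairing reads $B(u, v) = u^T v$, while $\bar{\rho}_L(M, \phi)$ is the contragredient representation, whose matrix is $(A^T)^{-1}$. Hence $B(Au, (A^T)^{-1} v) = u^T A^T (A^T)^{-1} v = u^T v = B(u, v)$. Since $\rho_L$ is moreover unitary, this contragredient coincides with the complex-conjugate representation $\bar A$, which is how $\bar{\rho}_L$ is realized here; one could equally verify the identity on the generators $T$ and $S$, but the abstract argument is cleaner.

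From here the conclusion is immediate. For $M = \sm a & b \\ c & d \esm \in \SL_2(\mathbb Z)$ and the associated $(M, \phi) \in \Mp_2(\mathbb Z)$ one computes $h(M\tau) = B(f(M\tau), g(M\tau)) = \phi(\tau)^{2k} \phi(\tau)^{2k'} B(\rho_L(M, \phi) f(\tau), \bar{\rho}_L(M, \phi) g(\tau)) = \phi(\tau)^{2(k + k')} h(\tau)$. Because $k + k' \in \mathbb Z$, the factor $\phi(\tau)^{2(k + k')} = (c\tau + d)^{k + k'}$ is single-valued, so $h$ descends to a well-defined function on $\SL_2(\mathbb Z)$ transforming with weight $k + k'$. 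Holomorphy of $h$ on $\mathbb H$ is clear, each $f_\gamma g_\gamma$ being a product of holomorphic functions. Finally, each $f_\gamma$ and $g_\gamma$ has a Fourier expansion bounded below in the exponent, so their products, hence $h$, have a $q$-expansion with only finitely many negative-exponent terms; thus $h$ is meromorphic at $\infty$. This yields $h \in M_{k + k'}^!(\SL_2(\mathbb Z))$.

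I expect the only genuine subtlety to be bookkeeping: keeping the bilinear pairing $B$ strictly distinct from the Hermitian pairing $\langle \cdot, \cdot \rangle$, and correctly interpreting $\bar{\rho}_L$ as the dual (contragredient) representation, so that the identity $A^T (A^T)^{-1} = I$ delivers the invariance with no reference to conjugation of the values. Everything else is a formal consequence of this one observation.
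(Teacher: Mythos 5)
Your proof is correct, and at its core it follows the same strategy as the paper's: realize $\sum_{\gamma\in L'/L} f_\gamma g_\gamma$ as a pairing of $f$ with $g$ that is invariant under the simultaneous action of $\rho_L$ and $\bar{\rho}_L$, then read off the weight-$(k+k')$ transformation law, with holomorphy and the cusp condition being immediate. The difference lies in which pairing carries the invariance, and it is a genuine (if modest) difference in mechanism. The paper uses the Hermitian product: it writes the sum as $\langle f, \overline{g}\rangle$, conjugates the transformation law of $g$ so that $\overline{g}$ transforms under $\rho_L$, and then appeals to unitarity of $\rho_L$ --- which is why Remark \ref{za} emphasizes that only unitarity is used. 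You instead keep the honestly bilinear pairing $B(u,v)=u^{T}v$ and use that $\bar{\rho}_L$ is the contragredient, so that, writing $A$ for the matrix of $\rho_L(M,\phi)$, invariance reduces to the identity $u^{T}A^{T}(A^{T})^{-1}v=u^{T}v$ with no conjugation anywhere. This buys you two things: you sidestep the paper's conjugation step $\overline{(c\tau+d)^{-k'}g(M\tau)}=\rho_L(\tilde{M})\,\overline{g(\tau)}$, which silently uses that the dual representation is realized by the conjugate matrices (i.e.\ unitarity again, a point your write-up makes explicit); and your argument applies verbatim to an arbitrary representation paired with its contragredient, unitary or not, which slightly strengthens the scope claimed in Remark \ref{za}. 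What the paper's route buys in exchange is that it never has to discuss how $\bar{\rho}_L$ is realized as matrices, since only $\rho_L$ appears in its final pairing identity. Your handling of the remaining points --- single-valuedness of $\phi(\tau)^{2(k+k')}=(c\tau+d)^{k+k'}$ because $k+k'\in\mathbb Z$, holomorphy on $\mathbb H$, and the $q$-exponents being bounded below --- matches what the paper needs and in fact spells out slightly more than the paper does.
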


\begin{proof}
Since $\sum_{\gamma\in L'/L} f_\gamma(\tau) g_\gamma(\tau)$ is
holomorphic on $\mathbb H$ and meromorphic at the cusp $\infty$, it
suffices to verify the modular transformation property. First note
that
\[ \sum_{\gamma\in L'/L} f_\gamma g_\gamma = \langle \sum_{\gamma\in L'/L}
f_\gamma \mathfrak e_\gamma, \sum_{\gamma\in L'/L}
\overline{g_\gamma} \mathfrak e_\gamma \rangle = \langle f,
\overline{g} \rangle. \] Here $\langle \cdot , \cdot \rangle$
denotes the usual dot product.

\noindent   Now we find for $M = (\begin{smallmatrix} a & b \\
c & d
\end{smallmatrix}) \in \SL_2(\mathbb Z)$ that
\begin{eqnarray*}
\big( \sum_{\gamma\in L'/L} f_\gamma g_\gamma \big) \big|_{k + k'} M
& = & (c\tau + d)^{-(k + k')} \sum_{\gamma\in L'/L} f_\gamma(M\tau)
g_\gamma(M\tau) \\
& = & \langle (c\tau + d)^{-k} f(M\tau), \overline{(c\tau +
d)^{-k'} g(M\tau)} \rangle \\
& = & \langle \rho_L(\tilde{M}) f(\tau), \rho_L(\tilde{M})
\overline{g(\tau)} \rangle.
\end{eqnarray*}
Since $\rho_L$ is unitary we have
\[ \big( \sum_{\gamma\in L'/L} f_\gamma g_\gamma \big) \big|_{k +
k'} M = \langle f(\tau), \overline{g(\tau)} \rangle =
\sum_{\gamma\in L'/L} f_\gamma g_\gamma. \]
\end{proof}

\begin{lemma}\label{Lemma - residue theorem}
Let $k, k' \in \frac{1}{2}\mathbb Z$ with $k + k' = 2$. We denote
the Fourier expansions of $f \in M_{k, \rho_L}^!$ and $g \in M_{k',
\bar{\rho}_L}^!$ by
\begin{eqnarray*}
f(\tau) & = & \sum_{\gamma\in L'/L} \sum_m c_f(\gamma, m) q^m
\mathfrak e_\gamma, \\
g(\tau) & = & \sum_{\gamma\in L'/L} \sum_n c_g(\gamma, n) q^n
\mathfrak e_\gamma. \\
\end{eqnarray*}
Then
\[ \sum_{\gamma\in L'/L} \sum_{m + n = 0} c_f(\gamma, m) c_g(\gamma, n) = 0. \]
\end{lemma}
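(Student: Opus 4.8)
The plan is to reduce the identity to the vanishing of the constant term of a single weight-$2$ weakly holomorphic elliptic modular form, and then to extract that vanishing from the residue theorem on the modular curve. First I would form the scalar product $h := \sum_{\gamma \in L'/L} f_\gamma g_\gamma = \langle f, \overline{g} \rangle$. Since $k + k' = 2$, Lemma~\ref{Lemma - observation} shows $h \in M_2^!(\SL_2(\mathbb Z))$; in particular $h$ is holomorphic on $\mathbb H$ and has an honest integral $q$-expansion $h(\tau) = \sum_N a_h(N) q^N$, the exponents being integers because $m \in \mathbb Z + Q(\gamma)$ and $n \in \mathbb Z - Q(\gamma)$ force $m + n \in \mathbb Z$. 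Multiplying out the two expansions, the coefficient of $q^0$ is exactly
\[ a_h(0) = \sum_{\gamma \in L'/L} \sum_{m + n = 0} c_f(\gamma, m) c_g(\gamma, n), \]
so the lemma is equivalent to the assertion $a_h(0) = 0$.

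Next I would translate this constant term into a residue. The weight-$2$ transformation law is precisely the statement that the differential $\omega := h(\tau)\, d\tau$ is invariant under $\SL_2(\mathbb Z)$, so it descends to a meromorphic $1$-form on the compact modular curve $X(1) = \overline{\SL_2(\mathbb Z) \backslash \mathbb H}$. Because $h$ is holomorphic on $\mathbb H$, the only possible pole of $\omega$ is at the single cusp $\infty$; in the local parameter $q = e^{2\pi i \tau}$ one has $d\tau = \frac{1}{2\pi i} \frac{dq}{q}$, whence $\omega = \frac{1}{2\pi i} h\, \frac{dq}{q}$ and $\mathrm{Res}_\infty \omega = \frac{1}{2\pi i} a_h(0)$. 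The residue theorem on the compact Riemann surface $X(1)$ states that the sum of all residues of $\omega$ vanishes; as the cusp is the only pole, this yields $a_h(0) = 0$, which is the desired conclusion.

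The one point that needs care, and which I regard as the main obstacle, is to confirm that $\omega$ genuinely extends to a meromorphic form on $X(1)$ with no hidden residues at the elliptic fixed points $i$ and $\rho$, where the covering $\mathbb H \to X(1)$ is ramified. I would check this by passing to a local uniformizer at such a point: the $\SL_2(\mathbb Z)$-invariance of $\omega$ forces its local expression to have the right parity under the (cyclic) stabilizer, and since $h$ is holomorphic on $\mathbb H$ one finds that $\omega$ is in fact holomorphic, hence residue-free, at $i$ and $\rho$. Alternatively, and perhaps more transparently, one can bypass the Riemann-surface language and integrate $\omega$ directly around the boundary of the standard truncated fundamental domain: the two vertical edges cancel by the periodicity of $h$, the two halves of the bottom arc cancel by the $S$-invariance of $\omega$, and the top horizontal segment at height $Y$ contributes $\pm a_h(0)$ since $\int_{-1/2}^{1/2} h(x + iY)\, dx = a_h(0)$ for every $Y > 0$. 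As the total integral is $0$ by Cauchy's theorem ($h$ being holomorphic in the interior), this again forces $a_h(0) = 0$ and proves the lemma.
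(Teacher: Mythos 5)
Your proof is correct and follows essentially the same route as the paper: form $\sum_{\gamma} f_\gamma g_\gamma$, invoke Lemma~\ref{Lemma - observation} to see it lies in $M_2^!(\SL_2(\mathbb Z))$, and conclude by the residue theorem that its constant term vanishes. The paper states this in two lines and leaves the residue-theorem step implicit; your expansion of that step (the descent of $h\,d\tau$ to $X(1)$, or equivalently the direct contour integration over the truncated fundamental domain) is exactly the standard justification the paper is appealing to.
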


\begin{proof}
By Lemma \ref{Lemma - observation}, $\sum_{\gamma\in L'/L} f_\gamma
g_\gamma$ is a weakly holomorphic elliptic modular form of weight
$2$. Thus by the residue theorem we immediately obtain the
assertion.
\end{proof}

\begin{remark}\label{za}
\begin{enumerate}
\item
 In Lemmas \ref{Lemma - observation} and \ref{Lemma - residue
theorem} the Weil representation $\rho_L$ can be replaced by
arbitrary unitary representation because we only used the unitary
property of the Weil representation.

\item Lemma \ref{Lemma - residue theorem} is indeed a special case
of Bruinier and Funke's result (see Proposition \ref{Proposition -
bilinear pairing} below). This can be used to show Zagier duality
for weakly holomorphic modular forms of integral or half integral
weight. For example one can easily infer \cite[Theorem 4]{Za},
\cite[Theorem 1]{RO} from Lemma \ref{Lemma - residue theorem} and
the results \cite[Theorems 5.1, 5.4]{EZ}, \cite[Theorem 5]{BB}
concerning the isomorphism between the spaces of scalar and vector
valued modular forms.

\end{enumerate}
\end{remark}

\section{\bf{The results of Bruinier, Funke, Ono, and Rhoades \cite{BF, BOR}}}

Assume that $k \leq 0$ is an integer. Recall that there is an
antilinear differential operator
\[ \xi_k : H_{k, \rho_L} \longrightarrow S_{2 - k,
\bar{\rho}_L} \] defined by
\[ \xi_k(f)(\tau) := y^{k - 2} \overline{L_k f(\tau)}, \]
where $L_k := -2iy^2 \frac{\partial}{\partial \bar{\tau}}$ is the
Maass lowering operator (see \cite{BF, BOR}). The Maass raising
operator is defined by $R_k := 2i \frac{\partial}{\partial \tau} + k
y^{-1}$. By \cite[Corollary 3.8]{BF} the following sequence is exact:
\[ \begin{CD}
0   @>>>   M_{k, \rho_L}^!   @>>>   H_{k, \rho_L} @>\xi_k>> S_{2 -
k, \bar{\rho}_L}   @>>>   0.
\end{CD} \]

\begin{proposition}\cite[Proposition 3.11]{BF}\label{Proposition - prescribed principal part}
For every Fourier polynomial of the form
\[ P(\tau) = \sum_{\gamma \in L'/L} \sum_{n \in \mathbb Z + Q(\gamma)
\atop n < 0} c^+(\gamma, n) e(n\tau) \mathfrak e_\gamma \] with
$c^+(\gamma, n) = (-1)^{k +  \frac{b^- - b^+}{2}} c^+(-\gamma,
n)$, there exists an $f \in H_{k, \rho_L}$ with principal part
$P_f(\tau) = P(\tau) + \mathfrak c$ for some $T$-invariant
constant $\mathfrak c \in \mathbb C[L'/L]$. The function $f$ is
uniquely determined if $k < 0$.
\end{proposition}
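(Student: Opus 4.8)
The plan is to prove existence by exhibiting explicit Maass--Poincar\'e series that realize each admissible ``atomic'' principal part and then superposing them, and to prove uniqueness for $k<0$ from the exact sequence
\[ \begin{CD}
0   @>>>   M_{k, \rho_L}^!   @>>>   H_{k, \rho_L} @>\xi_k>> S_{2 - k, \bar{\rho}_L}   @>>>   0
\end{CD} \]
together with the vanishing of holomorphic modular forms of negative weight. First I would note that the symmetry condition on $P$ is forced rather than imposed. Applying the transformation law in (i) to the central element $Z := S^2 = (-I, i) \in \Mp_2(\mathbb Z)$, which fixes every $\tau$ and satisfies $\rho_L(Z)\mathfrak e_\gamma = (-1)^{(b^- - b^+)/2} \mathfrak e_{-\gamma}$, gives $f(\tau) = i^{2k} \rho_L(Z) f(\tau) = (-1)^k \rho_L(Z) f(\tau)$ for every $f \in H_{k, \rho_L}$; comparing $\mathfrak e_\gamma$-components yields $c_f^+(\gamma, n) = (-1)^{k + (b^- - b^+)/2} c_f^+(-\gamma, n)$. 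Thus the stated hypothesis on $P$ is exactly the compatibility that any such principal part must satisfy (in particular $b^+ \equiv b^- \bmod 2$, so that the sign is well defined), and it lets me treat each pair $\{\gamma, -\gamma\}$ as a single degree of freedom.

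For existence I would construct, for each $\gamma \in L'/L$ and each $m > 0$ with $m \in \mathbb Z - Q(\gamma)$, a harmonic Poincar\'e series. Let $\mathcal M_s(y) := y^{-k/2} M_{-k/2,\, s - 1/2}(y)$ be built from the $M$-Whittaker function, and set
\[ F_{\gamma, m}(\tau, s) := \frac{1}{2} \sum_{(M, \phi) \in \tilde\Gamma_\infty \backslash \Mp_2(\mathbb Z)} \big( \mathcal M_s(4 \pi m y)\, e(-mx)\, \mathfrak e_\gamma \big) \big|_{k, \rho_L} (M, \phi), \]
where $\tilde\Gamma_\infty$ is the inverse image of the stabilizer of the cusp $\infty$ and $\big( g |_{k, \rho_L} (M, \phi) \big)(\tau) := \phi(\tau)^{-2k} \rho_L(M, \phi)^{-1} g(M\tau)$. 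The seed is an eigenfunction of $\Delta_k$ with eigenvalue $(s - k/2)(1 - s - k/2)$, so at the distinguished value $s_0 := 1 - k/2$ the eigenvalue vanishes and $F_{\gamma, m}(\tau) := F_{\gamma, m}(\tau, s_0)$ is annihilated by $\Delta_k$. For $k < 0$ we have $s_0 = 1 + |k|/2 > 1$, which lies inside the half-plane $\mathrm{Re}(s) > 1$ where the averaged series converges absolutely and locally uniformly, so $F_{\gamma, m}$ is directly a smooth $\Delta_k$-harmonic function of weight $k$ and type $\rho_L$; for $k = 0$ the value $s_0 = 1$ lies on the boundary and I would instead obtain $F_{\gamma, m}$ by analytic continuation of $F_{\gamma, m}(\tau, s)$ to $s_0$. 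Unwinding the coset sum and computing the Fourier expansion shows that $F_{\gamma, m}$ has holomorphic principal part
\[ e(-m\tau) \big( \mathfrak e_\gamma + (-1)^{k + (b^- - b^+)/2} \mathfrak e_{-\gamma} \big) + \mathfrak c_{\gamma, m} \]
for some $T$-invariant constant $\mathfrak c_{\gamma, m}$, the remaining holomorphic terms having exponents $\geq 0$ and the nonholomorphic part being of incomplete-Gamma type; in particular $F_{\gamma, m} \in H_{k, \rho_L}$.

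With these building blocks, existence is immediate: using the $\gamma \mapsto -\gamma$ symmetry of $P$ to choose one representative from each pair, an appropriate finite linear combination $f := \sum_{\{\gamma, -\gamma\}} \sum_{m > 0} a_{\gamma, m} F_{\gamma, m}$ (with scalars $a_{\gamma, m}$ chosen to match the prescribed coefficients $c^+(\gamma, -m)$) lies in $H_{k, \rho_L}$ and has principal part $P(\tau) + \mathfrak c$ with $\mathfrak c$ a $T$-invariant constant; that a constant term can occur in the $\mathfrak e_\gamma$-slot only when $Q(\gamma) \in \mathbb Z$ makes this $T$-invariance automatic. This proves the assertion for all $k \leq 0$. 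For uniqueness when $k < 0$, suppose $f_1, f_2 \in H_{k, \rho_L}$ both have principal part of the form $P + \mathfrak c_i$; then $g := f_1 - f_2$ satisfies $c_g^+(\gamma, n) = 0$ for all $n < 0$. By the Bruinier--Funke pairing (equivalently, Stokes' theorem applied to $\langle g, \xi_k(g) \rangle$) the Petersson square norm $\| \xi_k(g) \|^2$ is a finite sum over the negative-index coefficients $c_g^+(\gamma, n)$ of $g$, all of which vanish; hence $\xi_k(g) = 0$, so $g \in M_{k, \rho_L}^!$ by exactness, and being weakly holomorphic with principal part a constant it is a holomorphic modular form of weight $k < 0$, forcing $g = 0$. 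This step is exactly what fails at $k = 0$, where the nonzero $\rho_L$-invariant constants are holomorphic of weight $0$ and account for the genuine ambiguity of $\mathfrak c$ there.

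I expect the main obstacle to be the analytic heart of the second paragraph: proving convergence (and, for $k = 0$, the analytic continuation) of the Poincar\'e series and, above all, carrying out its Fourier expansion to confirm that the principal part is exactly the claimed symmetric term plus a single $T$-invariant constant, with no spurious negative holomorphic coefficients. This computation runs through Kloosterman-type exponential sums twisted by the Weil representation and the connection formulas between the $M$- and $W$-Whittaker functions, and it is where the precise normalization $s_0 = 1 - k/2$ and the consistency of $\rho_L$ at weight $k$ are genuinely used.
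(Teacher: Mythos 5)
The paper itself offers no proof of this proposition: it is quoted directly from \cite[Proposition 3.11]{BF}, so your argument has to be measured against the proof in that source and against the paper's Remark (2), which points to \cite[Proposition 1.10]{Br} for the explicit construction when $k<0$. For $k<0$ your proposal is exactly that construction and is sound: the preliminary computation with $Z=S^2$ (note $\rho_L(Z)\mathfrak e_\gamma = i^{\,b^--b^+}\mathfrak e_{-\gamma}$, which equals your $(-1)^{(b^--b^+)/2}$ since the even-signature hypothesis makes the exponent integral) correctly shows the symmetry on $P$ is forced; the seed $y^{-k/2}M_{-k/2,\,s-1/2}(4\pi m y)e(-mx)\mathfrak e_\gamma$ has $\Delta_k$-eigenvalue $s(1-s)+(k^2-2k)/4=(s-k/2)(1-s-k/2)$, vanishing at $s_0=1-k/2>1$, where the series converges absolutely; the unwinding and principal-part computation are \cite[Theorem 1.9, Proposition 1.10]{Br}; the $T$-invariance of $\mathfrak c$ is automatic because a constant term can only occur in the $\mathfrak e_\gamma$-component when $Q(\gamma)\in\mathbb Z$; and your uniqueness argument (the pairing of Proposition \ref{Proposition - bilinear pairing} applied to $\xi_k(g)$ forces $\|\xi_k(g)\|^2=0$ since the $n=0$ terms pair against cusp-form constant terms, so $g\in M_{k,\rho_L}^!$ by exactness, and a weakly holomorphic form of weight $k<0$ with constant principal part vanishes) is the standard and correct one.

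The genuine soft spot is $k=0$, which the proposition as used in this paper includes. There you assert analytic continuation of $F_{\gamma,m}(\tau,s)$ to $s_0=1$; this is a real theorem, not a routine limit: $s=1$ is simultaneously the edge of absolute convergence and the spectral point carrying the residual spectrum of $\Delta_0$ (the $\rho_L$-invariant constants are square-integrable harmonic forms), so one must either prove holomorphy at $s=1$ \`a la Niebur in the vector-valued setting, or take the constant term of a Laurent expansion and absorb any residue into $\mathfrak c$ --- consistent, incidentally, with the loss of uniqueness at $k=0$. The proof in \cite{BF} avoids this analysis entirely and is genuinely different: given $P$, represent the linear functional $g\mapsto\sum_{\gamma,n<0}c^+(\gamma,n)c_g(\gamma,-n)$ on $S_{2-k,\bar{\rho}_L}$ by a cusp form via the Petersson product, use surjectivity of $\xi_k$ (the exact sequence the paper quotes as \cite[Corollary 3.8]{BF}) to produce $f_0$ whose principal part has the same pairing with every cusp form, and then realize $P-P_{f_0}$ as the principal part of a weakly holomorphic form by Borcherds' Serre-duality criterion; this treats $k=0$ and $k<0$ uniformly at no analytic cost, while your route buys explicit Poincar\'e-series representatives (the very ones the paper's Remark recommends for $k<0$). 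To close the gap you should either import a Niebur-type continuation for the vector-valued weight-$0$ series or switch to the duality argument when $k=0$.
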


Using the Petersson scalar product, we can obtain a bilinear pairing
between $M_{2 - k, \bar{\rho}_L}$ and $H_{k, \rho_L}$ defined by
\[ \{ g, f \} = (g, \xi_k(f)) := \int_{\SL_2(\mathbb Z) \backslash
\mathbb H} \langle g, \xi_k(f) \rangle y^{2 - k} \frac{dx dy}{y^2},
\] where $g \in M_{2 - k, \bar{\rho}_L}$ and $f \in H_{k, \rho_L}$.

\begin{proposition}\cite[Proposition 3.5]{BF}\label{Proposition - bilinear pairing}
If $g \in M_{2 - k, \bar{\rho}_L}$ and $f \in H_{k, \rho_L}$, then
\[ \{ g, f \} = \sum_{\gamma \in L'/L} \sum_{n \leq 0} c_f^+(\gamma,
n) c_g(\gamma, -n). \]
\end{proposition}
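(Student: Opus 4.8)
The plan is to evaluate the defining integral $\{g,f\}=(g,\xi_k(f))=\int_{\SL_2(\mathbb Z)\backslash\mathbb H}\langle g,\xi_k(f)\rangle\,y^{2-k}\,\frac{dx\,dy}{y^2}$ by Stokes' theorem, turning it into the constant Fourier coefficient of a weight-$2$ invariant, and then reading off that coefficient from the Fourier expansions of $f$ and $g$.

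First I would record that the product $h:=\langle f,\overline g\rangle=\sum_{\gamma\in L'/L}f_\gamma g_\gamma$ transforms like a (non-holomorphic) modular form of weight $2$ for $\SL_2(\mathbb Z)$: this is the unitarity computation already carried out in Lemma~\ref{Lemma - observation}, which uses only the transformation laws of $f$ (weight $k$, type $\rho_L$) and $g$ (weight $2-k$, type $\bar\rho_L$) together with the unitarity of $\rho_L$, and is insensitive to the harmonicity of $f$. Consequently $h\,d\tau$ is $\SL_2(\mathbb Z)$-invariant. Next I would differentiate: since $g$ is holomorphic, $\partial_{\bar\tau}h=\sum_\gamma(\partial_{\bar\tau}f_\gamma)g_\gamma$, and from $\xi_k(f)=y^{k-2}\overline{L_kf}$ with $L_k=-2iy^2\partial_{\bar\tau}$ one gets $\partial_{\bar\tau}f=-\tfrac{1}{2i}y^{-k}\overline{\xi_k(f)}$. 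Substituting and using $d\bar\tau\wedge d\tau=2i\,dx\,dy$ yields the key identity
\[ d(h\,d\tau)=\partial_{\bar\tau}h\;d\bar\tau\wedge d\tau=-\langle g,\xi_k(f)\rangle\,y^{2-k}\,\frac{dx\,dy}{y^2}, \]
so that $\{g,f\}=-\int_{\SL_2(\mathbb Z)\backslash\mathbb H}d(h\,d\tau)$.

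I would then apply Stokes' theorem on the truncated fundamental domain $\mathcal F_T=\mathcal F\cap\{y\le T\}$. The two vertical sides cancel because $h(\tau+1)=h(\tau)$, and the lower arc cancels because $h\,d\tau$ is $S$-invariant, so only the horizontal segment $y=T$ survives and $\oint_{\partial\mathcal F_T}h\,d\tau=-a_0(T)$, where $a_0(T)=\int_{-1/2}^{1/2}h(x+iT)\,dx$ is the constant Fourier coefficient of $h$ at height $T$. Hence $\{g,f\}=\lim_{T\to\infty}a_0(T)$. Writing $f=f^++f^-$, the contribution of $f^-$ to $a_0(T)$ is a sum of terms each proportional to $\Gamma(1-k,4\pi|n|T)$, which tends to $0$ as $T\to\infty$; the contribution of $f^+$ is the $T$-independent pairing of matching Fourier exponents, namely $\sum_{\gamma}\sum_{l}c_f^+(\gamma,l)c_g(\gamma,-l)$. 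Because $g$ is \emph{holomorphic}, $c_g(\gamma,-l)$ vanishes unless $-l\ge 0$, i.e. $l\le 0$, which leaves exactly $\sum_{\gamma\in L'/L}\sum_{n\le 0}c_f^+(\gamma,n)c_g(\gamma,-n)$, as claimed.

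The step I expect to require the most care is the analytic justification at the cusp: that the defining integral converges at all, that Stokes' theorem may be applied and the limit $T\to\infty$ taken, and that in this limit only the matching (hence $x$-independent) Fourier modes contribute. All of this rests on $\xi_k(f)$ being a \emph{cusp} form: its exponential decay makes $\langle g,\xi_k(f)\rangle y^{2-k}$ integrable against $\tfrac{dx\,dy}{y^2}$ over the cuspidal region and forces the non-constant modes of $h$, as well as the incomplete-Gamma terms coming from $f^-$, to die in the limit, leaving only the finitely many surviving matching coefficients.
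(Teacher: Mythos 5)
Your proof is correct: the key identity $d\bigl(\langle f,\overline g\rangle\,d\tau\bigr)=-\langle g,\xi_k(f)\rangle\,y^{2-k}\,\frac{dx\,dy}{y^2}$, the Stokes argument on $\mathcal F_t$, and the extraction of the constant term (with the incomplete-Gamma terms from $f^-$ dying as $t\to\infty$ thanks to the cuspidality of $\xi_k(f)$ and polynomial coefficient bounds) are all sound. The paper itself gives no proof of this proposition, merely citing \cite[Proposition 3.5]{BF}, and your argument is essentially the one given there --- the same Stokes'-theorem computation the paper later reuses in proving Proposition \ref{Proposition - regularized bilinear pairing}.
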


As pointed out in Remark \ref{za}, one can easily infer Zagier
duality for weakly holomorphic modular forms from Proposition
\ref{Proposition - bilinear pairing}. But this is insufficient for
our purpose because we are going to deal with harmonic weak Maass
forms. To this end we need to define the regularized bilinear
pairing $\{ \cdot , \cdot \}^{reg}$.

Following Borcherds \cite[Section 6]{Bo}, we define the regularized
bilinear pairing $\{ g, f \}^{reg}$ between $M_{2 - k,
\bar{\rho}_L}^!$ and $H_{k, \rho_L}$ as the constant term in the
Laurent expansion at $s = 0$ of the meromorphic continuation in $s$
of the function
\[ \lim_{t \rightarrow \infty} \int_{\mathcal F_t} \langle g, \xi_k(f)
\rangle y^{2 - k - s} \frac{dx dy}{y^2}, \] where
\[ \mathcal F_t = \{ \tau \in \mathbb H \, | \, |\tau| \geq 1, \,
|x| \leq \frac{1}{2}, \mbox{ and } y \leq t \} \] denotes the
truncated fundamental domain for $\SL_2(\mathbb Z)$ on $\mathbb H$.

\begin{proposition}\label{Proposition - regularized bilinear pairing}
Let $f \in H_{k, \rho_L}$ and $g \in M_{2 - k, \bar{\rho}_L}^!$.
Suppose that $g$ has vanishing constant terms, i.e. $c_g(\gamma, 0)
= 0$ for all $\gamma \in L'/L$. Then
\[ \{ g, f \}^{reg} = \sum_{\gamma \in L'/L} \sum_{m + n = 0}
c_f^+(\gamma, m) c_g(\gamma, n). \]
\end{proposition}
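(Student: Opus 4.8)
The plan is to follow the Stokes'/residue strategy of Borcherds and Bruinier--Funke, rewriting the regularized integrand as an exact form plus an $s$-small correction and then reading off the constant term at $s=0$. First I would record the pointwise shape of the integrand. Writing $f=\sum_\gamma f_\gamma \mathfrak e_\gamma$ and $g=\sum_\gamma g_\gamma \mathfrak e_\gamma$, and using $\xi_k(f)=y^{k-2}\overline{L_kf}$ with $L_k=-2iy^2\partial_{\bar\tau}$ together with the antilinearity of $\langle\cdot,\cdot\rangle$, one gets
\[ \langle g,\xi_k(f)\rangle\, y^{2-k} \;=\; \sum_{\gamma\in L'/L} g_\gamma\,(L_kf)_\gamma \;=\; -2iy^2\sum_{\gamma\in L'/L} g_\gamma\,\partial_{\bar\tau}f_\gamma. \]
I then introduce $\Phi:=\sum_{\gamma} g_\gamma f_\gamma$ and the $1$-form $\omega_s:=y^{-s}\Phi\,d\tau$. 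The transformation computation in Lemma \ref{Lemma - observation} uses only the transformation laws, so it applies verbatim to the Maass form $f$ and gives $\Phi(M\tau)=(c\tau+d)^2\Phi(\tau)$ for $M\in\SL_2(\mathbb Z)$; hence $\omega_0=\Phi\,d\tau$ is $\SL_2(\mathbb Z)$-invariant.

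Next I would compute $d\omega_s$. Since $g$ is holomorphic ($\partial_{\bar\tau}g_\gamma=0$) and $\partial_{\bar\tau}y=i/2$, a short calculation yields
\[ d\omega_s \;=\; \Bigl(-\tfrac{is}{2}y^{-s-1}\Phi + y^{-s}\sum_{\gamma} g_\gamma\,\partial_{\bar\tau}f_\gamma\Bigr)\,d\bar\tau\wedge d\tau, \]
and comparing with the expression for the integrand above (using $d\bar\tau\wedge d\tau=2i\,dx\wedge dy$) produces the key identity
\[ \langle g,\xi_k(f)\rangle\, y^{2-k-s}\,\frac{dx\,dy}{y^2} \;=\; -\,d\omega_s \;+\; s\,y^{-s-1}\Phi\,dx\,dy. \]
Integrating over the truncated domain $\mathcal F_t$ and applying Stokes' theorem turns the first term into a boundary integral $-\int_{\partial\mathcal F_t}\omega_s$, leaving a correction with an explicit factor of $s$.

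For the boundary I would argue that only the top survives. The two vertical sides cancel because $\omega_s$ is invariant under $\tau\mapsto\tau+1$; the two halves of the arc cancel because on $|\tau|=1$ the map $S$ fixes $y$ (so $y^{-s}$ is $S$-invariant there) while $\Phi\,d\tau$ is $S$-invariant by weight-$2$ modularity. Thus
\[ \int_{\partial\mathcal F_t}\omega_s \;=\; -\,t^{-s}\int_{-1/2}^{1/2}\Phi(x+it)\,dx \;=\; -\,t^{-s}\Phi_0(t), \]
where $\Phi_0(y)$ is the constant Fourier coefficient of $\Phi$ in $x$. Expanding the product $\sum_\gamma g_\gamma(f_\gamma^++f_\gamma^-)$ and extracting the $x$-constant mode gives $\Phi_0(y)=C+r(y)$, where $C=\sum_{\gamma}\sum_{m+n=0}c_f^+(\gamma,m)c_g(\gamma,n)$ is independent of $y$ and $r(y)$ is a finite sum of incomplete-Gamma terms coming from $f^-$, hence exponentially decaying.

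Finally I would treat the correction term by splitting $\mathcal F_t$ into the compact part $\{y\le1\}$ and the strip $\{1\le y\le t\}$ and integrating the strip in $x$ first: the correction becomes $s\!\int_{\mathcal F^{\le1}}y^{-s-1}\Phi\,dx\,dy + s\!\int_1^t y^{-s-1}\Phi_0(y)\,dy$, and with $\Phi_0=C+r$ the second integral contributes $C(1-t^{-s})$ plus $s$ times an entire function. Collecting terms, the $t^{-s}$-pieces combine into $t^{-s}r(t)\to0$ as $t\to\infty$ (for $\mathrm{Re}\,s>0$), giving
\[ \lim_{t\to\infty}\int_{\mathcal F_t}\langle g,\xi_k(f)\rangle\, y^{2-k-s}\,\frac{dx\,dy}{y^2} \;=\; C + s\,B(s), \]
with $B(s)$ holomorphic near $s=0$ (the compact integral because $y$ is bounded away from $0$, the other by the exponential decay of $r$). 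Hence the Laurent constant term at $s=0$ is exactly $C$, which is the claim. The main obstacle is the careful bookkeeping of the $s$-regularization: I must verify that the exponentially growing non-constant Fourier modes of $\Phi$ (products of principal parts) integrate to zero over each horizontal slice, so that only the polynomially controlled mode $\Phi_0$ feeds into the $y$-integral and $y^{-s}$ legitimately yields a meromorphic continuation; I would also check that the hypothesis $c_g(\gamma,0)=0$ prevents the constant Fourier mode of $g$ from producing an extra pole (equivalently, an anomalous constant) at $s=0$.
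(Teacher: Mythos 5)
Your strategy is exactly the one that underlies the paper's (very short) proof: the paper quotes \cite[Remark 8]{BOR} to identify $\{g,f\}^{reg}$ with the unregularized limit and then invokes the Stokes-theorem argument of \cite[Proposition 3.5]{BF}, whereas you reprove both ingredients at once by carrying the $s$-regularization through the Stokes computation. Your computations check out: the identity $\langle g,\xi_k(f)\rangle\,y^{2-k-s}\,\frac{dx\,dy}{y^2}=-d\omega_s+s\,y^{-s-1}\Phi\,dx\,dy$, the cancellation of the vertical sides (by $T$-invariance) and of the two halves of the arc (by $S$-invariance of $y$ on $|\tau|=1$ together with the weight-$2$ transformation of $\Phi$), and the final bookkeeping in which the two $Ct^{-s}$ terms cancel exactly, leaving $C+sB(s)$ with $B$ holomorphic near $s=0$.

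There is, however, one genuine gap: the claim that $r(y)$ is ``a finite sum of incomplete-Gamma terms.'' In general it is the infinite series $r(y)=\sum_{\gamma}\sum_{m<0}c_f^-(\gamma,m)\,c_g(\gamma,-m)\,\Gamma(1-k,4\pi|m|y)$, because for a generic $f\in H_{k,\rho_L}$ (any $f$ with $\xi_k(f)\neq 0$, e.g.\ a Maass--Poincar\'e series) the nonholomorphic part has infinitely many nonzero coefficients, and a weakly holomorphic $g$ has infinitely many nonzero $c_g(\gamma,n)$ with $n>0$. Your argument uses the exponential decay of $r(y)$ twice (to kill $t^{-s}r(t)$ and to make $\int_1^\infty y^{-s-1}r(y)\,dy$ entire in $s$), and the same convergence issue governs the termwise $x$-integration that produces $\Phi_0$; note that a pointwise estimate cannot substitute here, since $|g_\gamma|$ grows like $e^{2\pi|n_0|y}$ at the cusp while $f_\gamma^-$ decays only like $e^{-\varepsilon y}$, so the product $g_\gamma f_\gamma^-$ may blow up and one must genuinely argue coefficientwise. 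The repair is standard: $c_f^-(\gamma,m)\ll |m|^{k/2}$ because $\xi_k(f)\in S_{2-k,\bar{\rho}_L}$ is a cusp form (Hecke bound), while $c_g(\gamma,n)\ll e^{C\sqrt{n}}$, and $\Gamma(1-k,4\pi|m|y)\ll (|m|y)^{-k}e^{-4\pi|m|y}$, so the series converges for $y\geq 1$ and is $O(e^{-cy})$; with this inserted your proof is complete. (Also, concerning your closing worry: your computation in fact never needs the hypothesis $c_g(\gamma,0)=0$ --- the products $c_f^+(\gamma,0)c_g(\gamma,0)$ simply sit inside $C$ and no pole arises; the hypothesis matters for the paper's route because it is what makes \cite[Remark 8]{BOR} applicable, and under it those terms vanish, so both arguments land on the stated formula.)
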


\begin{proof}
According to \cite[Remark 8]{BOR} we have
\[ \{ g, f \}^{reg} = \lim_{t \rightarrow \infty} \int_{\mathcal F_t}
\langle g, \xi_k(f) \rangle y^{2 - k} \frac{dx dy}{y^2}. \] Now one
can apply the same argument as in the proof of \cite[Proposition
3.5]{BF}.
\end{proof}

Recall the differential operator
\[ D := \frac{1}{2 \pi i} \frac{d}{d\tau}. \]
From Bol's identity \cite[Lemma 2.1]{BOR} one finds
\[ D^{1 - k} = \frac{1}{(-4\pi)^{1 - k}} R_k^{1 - k}. \]

\begin{proposition}\label{Proposition - from Bol's identity}
If $h \in H_{k, \rho_L}$, then
\[ D^{1 - k} h \in M_{2 - k, \rho_L}^!. \]
Moreover, we have
\[ D^{1 - k} h = D^{1 - k} h^+ = \sum_{\gamma, n} c_h^+(\gamma, n)
n^{1 - k} e(n\tau) \mathfrak e_\gamma. \]
\end{proposition}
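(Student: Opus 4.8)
The plan is to exploit the Bol identity $D^{1-k} = \frac{1}{(-4\pi)^{1-k}}R_k^{1-k}$ recorded just above, which reduces the statement to two essentially independent tasks: establishing the weight-$(2-k)$ transformation law (which I will get from the raising operator) and computing the Fourier expansion on the two pieces of the canonical splitting $h = h^+ + h^-$. Since $h \in H_{k, \rho_L}$ with $k \leq 0$, the number $1-k$ is a positive integer, and on any smooth function $D^{1-k}$ is just $(2\pi i)^{-(1-k)}$ times the holomorphic derivative $\partial_\tau^{1-k}$; I will use this concrete description repeatedly.

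First I would settle modularity. The Maass raising operator $R_\kappa$ intertwines the metaplectic slash actions, sending a function of weight $\kappa$ and type $\rho_L$ to one of weight $\kappa + 2$ and type $\rho_L$. Applying it to the genuinely modular form $h$ a total of $1-k$ times raises the weight from $k$ to $k + 2(1-k) = 2-k$ while preserving $\rho_L$; by Bol's identity this composite is exactly $(-4\pi)^{1-k} D^{1-k}h$. Hence $D^{1-k}h$ obeys the weight-$(2-k)$, type-$\rho_L$ transformation law. It then remains only to identify its Fourier expansion and to verify holomorphy together with meromorphy at the cusp.

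The heart of the argument is the vanishing $D^{1-k}h^- = 0$. For $n < 0$, since $1-k$ is a positive integer the incomplete Gamma function is elementary: $\Gamma(1-k, 4\pi|n|y) = (-k)!\, e^{-4\pi|n|y}\sum_{j=0}^{-k}(4\pi|n|y)^j/j!$. Using the identity $e^{-4\pi|n|y}e(n\tau) = e(n\bar\tau)$, each term of $h^-$ becomes a polynomial in $y$ of degree $-k = |k|$ times the antiholomorphic exponential $e(n\bar\tau)$. Writing $y = (\tau - \bar\tau)/(2i)$, the factor $e(n\bar\tau)$ is annihilated by $\partial_\tau$, while each application of $\partial_\tau$ lowers the degree of the $y$-polynomial by one; since that polynomial has degree $\leq |k| = (1-k)-1$, the $(1-k)$-fold derivative $\partial_\tau^{1-k}$ destroys it. Thus $D^{1-k}h^- = 0$ and $D^{1-k}h = D^{1-k}h^+$.

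On the holomorphic part the computation is immediate: $D^{1-k}e(n\tau) = n^{1-k}e(n\tau)$ yields $D^{1-k}h^+ = \sum_{\gamma,n} c_h^+(\gamma,n)\, n^{1-k} e(n\tau)\, \mathfrak e_\gamma$, which is the asserted formula, the $n=0$ term dropping out automatically. This series is holomorphic on $\mathbb H$ and, because the principal part of $h$ is a Fourier polynomial, has only finitely many negative exponents, so it is meromorphic at $\infty$; combined with the transformation law this gives $D^{1-k}h \in M_{2-k, \rho_L}^!$. I expect the only genuinely delicate point to be the vanishing $D^{1-k}h^- = 0$, since it is precisely there that the hypothesis $k \leq 0$ and the exponent $1-k$ enter through the degree count (a degree-$|k|$ polynomial against $1-k = |k|+1$ derivatives); the transformation law and the holomorphic computation are then routine.
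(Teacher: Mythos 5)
Your proof is correct and takes essentially the same route as the paper: the paper likewise derives the weight-$(2-k)$, type-$\rho_L$ transformation law from $R_k^{1-k}$ via Bol's identity, and then refers to the proof of \cite[Theorem 1.1]{BOR} for the remaining assertions, which is precisely the computation you carry out (elementary form of $\Gamma(1-k,\cdot)$ for positive integer parameter, so that $h^-$ contributes a degree-$(-k)$ polynomial in $y$ times $e(n\bar\tau)$, annihilated by $1-k$ applications of $\partial_\tau$). The only difference is that you make the cited argument self-contained, which is a strength, not a gap.
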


\begin{proof}
Note that $R_k^{1 - k} h$ satisfies the transformation behavior for
vector valued modular forms of weight $2 - k$ and type $\rho_L$. The
remaining assertions can be derived by the same argument as in the
proof of \cite[Theorem 1.1]{BOR}.
\end{proof}

\begin{proposition}\label{Proposition - orthogonality}
If $f \in H_{k, \rho_L}$ and $h \in H_{k, \bar{\rho}_L}$, then
\[ \{ D^{1 - k} h, f \}^{reg} = 0. \]
\end{proposition}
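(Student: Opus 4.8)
The plan is to evaluate the regularized pairing directly by Stokes' theorem, using Bol's identity to write $D^{1-k}h$ as an iterated raising operator and exploiting the holomorphy of $\xi_k(f)$. First I would record the set-up. Since $h \in H_{k, \bar{\rho}_L}$, the same argument proving Proposition \ref{Proposition - from Bol's identity} applies to the dual type and gives $g := D^{1-k}h \in M_{2-k, \bar{\rho}_L}^!$ with Fourier coefficients $c_g(\gamma, n) = n^{1-k} c_h^+(\gamma, n)$. Because $k \leq 0$ we have $1-k \geq 1$, so the constant terms $c_g(\gamma, 0) = 0^{1-k} c_h^+(\gamma, 0)$ all vanish. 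By \cite[Remark 8]{BOR} the $s$-regularization is then unnecessary, and
\[ \{D^{1-k}h, f\}^{reg} = \lim_{t \to \infty} \int_{\mathcal F_t} \langle D^{1-k}h, \xi_k(f) \rangle y^{2-k} \frac{dx\,dy}{y^2}. \]

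The crucial step is to substitute Bol's identity $D^{1-k}h = \frac{1}{(-4\pi)^{1-k}} R_k^{1-k} h$ and integrate by parts once. Writing $R_k^{1-k} = R_{-k} \circ R_k^{-k}$ and using the adjunction between the raising operator $R_{-k}$ (weight $-k \to 2-k$) and the lowering operator $L_{2-k}$ with respect to the Petersson integrand, Stokes' theorem on $\mathcal F_t$ produces an interior integral whose integrand is proportional to $\langle R_k^{-k} h, L_{2-k}\xi_k(f) \rangle$, plus a boundary integral over $\partial \mathcal F_t$. Since $\xi_k(f) \in S_{2-k, \bar{\rho}_L}$ is holomorphic, we have $L_{2-k}\xi_k(f) = -2iy^2 \partial_{\bar\tau}\xi_k(f) = 0$, so the interior integral vanishes identically and only the boundary term remains. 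I note that the exact constant and sign in the adjunction are irrelevant, since the interior term is zero regardless of them.

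It then remains to show the boundary integral tends to $0$. The integration by parts produces a term $\oint_{\partial \mathcal F_t} \omega$ for a $1$-form $\omega$ that is $\SL_2(\mathbb Z)$-invariant, by the same weight-$2$ bookkeeping as in Lemma \ref{Lemma - observation} and the proof of Proposition \ref{Proposition - bilinear pairing}; hence the contributions of the two vertical edges $x = \pm \tfrac12$ cancel under $T$, those of the two arcs of $|\tau| = 1$ cancel under $S$, and only the horizontal segment at height $t$ survives. Expanding $R_k^{-k} h$ and $\xi_k(f)$ into Fourier series and integrating over $x \in [-\tfrac12, \tfrac12]$ forces the exponents of the two factors to coincide; as $\xi_k(f)$ is a cusp form it has only exponents $n > 0$, so every surviving term carries a factor $e^{-4\pi n t} \to 0$, while the exponentially growing principal part of $h$ matches no exponent of a cusp form and the non-holomorphic part of $h$ decays on its own. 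Thus the segment integral vanishes as $t \to \infty$, giving $\{D^{1-k}h, f\}^{reg} = 0$.

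The main obstacle is precisely this boundary analysis. A priori $R_k^{1-k} h$ grows exponentially at the cusp because of the principal part of $h$, so one cannot merely discard the boundary term; I would have to check carefully that pairing this growing form against the rapidly decaying $\xi_k(f)$ leaves nothing behind. What rescues the argument is that $\xi_k(f)$ is a genuine cusp form, so the frequency matching in the $x$-integral only ever couples it to the positive-exponent part of $h$, producing exponential decay. This is also the conceptual reason the proposition is true: the holomorphy of $\xi_k(f)$, encoded in $L_{2-k}\xi_k(f) = 0$, is exactly what makes the exact form $D^{1-k}h$ pair trivially with $f$. (One should also confirm that the purely algebraic route via Proposition \ref{Proposition - regularized bilinear pairing}, namely $\{D^{1-k}h, f\}^{reg} = \sum_{\gamma}\sum_{m+n=0} c_f^+(\gamma, m)\, n^{1-k} c_h^+(\gamma, n)$, only yields the symmetry $\{D^{1-k}h, f\}^{reg} = (-1)^{1-k}\{D^{1-k}f, h\}^{reg}$ and not the vanishing, which is why the analytic input above is genuinely needed.)
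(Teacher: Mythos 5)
Your proof is correct, but it takes a genuinely more self-contained route than the paper. The paper's entire proof is two lines: it observes that $\xi_k(f) \in S_{2-k, \bar{\rho}_L}$ is a cusp form and then invokes \cite[Corollary 4.2]{BOR}, which asserts exactly that the image of $D^{1-k}$ pairs to zero against cusp forms under the regularized pairing. What you have written is, in effect, a proof of that cited result: Bol's identity to express $D^{1-k}h$ as a constant multiple of $R_{-k}\bigl(R_k^{-k}h\bigr)$, the raising/lowering adjunction via Stokes' theorem so that $L_{2-k}\xi_k(f) = 0$ (holomorphy) annihilates the interior term, $\SL_2(\mathbb Z)$-invariance of the resulting boundary $1$-form so that only the horizontal segment at height $t$ survives, and frequency matching against the cusp form $\xi_k(f)$ so that the exponentially growing principal part of $h$ couples to nothing while the surviving matched terms decay like $e^{-4\pi n t}$. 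Each of these steps is sound, including the preliminary reduction (the constant terms of $D^{1-k}h$ vanish because $1-k \geq 1$, so by \cite[Remark 8]{BOR} the $s$-regularization is unnecessary, exactly as in Proposition \ref{Proposition - regularized bilinear pairing}), and your closing observation is apt: Proposition \ref{Proposition - regularized bilinear pairing} alone merely re-expresses the pairing as a coefficient sum, so analytic input of this kind is genuinely needed. The trade-off is the expected one: the paper's citation is economical and delegates the boundary analysis to \cite{BOR}, whereas your argument is longer but self-contained and makes the mechanism explicit --- holomorphy of $\xi_k(f)$ kills the interior integral, cuspidality kills the boundary.
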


\begin{proof}
Note that $\xi_k (f) \in S_{2 - k, \bar{\rho}_L}$ is a cusp form.
Now the assertion follows from \cite[Corollary 4.2]{BOR}.
\end{proof}

Combining Propositions \ref{Proposition - regularized bilinear
pairing}-\ref{Proposition - orthogonality} we have the following.

\begin{theorem}\label{Theorem - a general form of Zagier duality}
Let $k \leq 0$ be an integer, $f \in H_{k, \rho_L}$, and $g \in D^{1
- k} (H_{k, \bar{\rho}_L}) \subset M_{2 - k, \bar{\rho}_L}^!$. Then
\[ \sum_{\gamma \in L'/L} \sum_{m + n = 0} c_f^+(\gamma, m)
c_g(\gamma, n) = 0. \]
\end{theorem}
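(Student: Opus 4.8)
The plan is to write the given $g$ as $g = D^{1-k} h$ for some $h \in H_{k, \bar{\rho}_L}$ and then to evaluate the regularized pairing $\{g, f\}^{reg}$ in two different ways, matching the two expressions. First I would invoke Proposition \ref{Proposition - from Bol's identity}, applied with the dual representation $\bar{\rho}_L$ in place of $\rho_L$ (legitimate since $\bar{\rho}_L$ is again unitary and the propositions of Section 3 only use unitarity), to obtain the explicit Fourier expansion
\[ g = D^{1-k} h = \sum_{\gamma \in L'/L} \sum_{n} c_h^+(\gamma, n)\, n^{1-k}\, e(n\tau)\, \mathfrak e_\gamma. \]
The crucial observation is that the constant term of $g$ vanishes: the coefficient of $e(0\cdot\tau)\,\mathfrak e_\gamma$ equals $c_h^+(\gamma, 0)\cdot 0^{1-k}$, and since $k \leq 0$ we have $1 - k \geq 1$, so the factor $0^{1-k}$ is zero. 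Hence $c_g(\gamma, 0) = 0$ for every $\gamma \in L'/L$.

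With this vanishing in hand, Proposition \ref{Proposition - regularized bilinear pairing} applies to the pair $(g, f)$ and yields
\[ \{g, f\}^{reg} = \sum_{\gamma \in L'/L} \sum_{m+n=0} c_f^+(\gamma, m)\, c_g(\gamma, n). \]
On the other hand, Proposition \ref{Proposition - orthogonality}, applied to $f \in H_{k, \rho_L}$ and $h \in H_{k, \bar{\rho}_L}$, gives $\{D^{1-k} h, f\}^{reg} = 0$, that is $\{g, f\}^{reg} = 0$. Equating the two evaluations of $\{g, f\}^{reg}$ produces exactly the asserted identity.

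The single step doing any real work is the verification that $g$ has vanishing constant terms, since this hypothesis is precisely what licenses the use of Proposition \ref{Proposition - regularized bilinear pairing}; everything else is a direct substitution combining the three preceding propositions. I therefore expect no genuine obstacle, only the bookkeeping of confirming that the index conventions (the condition $m+n=0$) agree across the two propositions being combined and that the passage from $\rho_L$ to $\bar{\rho}_L$ is harmless.
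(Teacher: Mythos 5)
Your proposal is correct and is essentially the paper's own proof: the paper proves the theorem precisely by combining Propositions \ref{Proposition - regularized bilinear pairing}, \ref{Proposition - from Bol's identity}, and \ref{Proposition - orthogonality} in exactly this way. The one detail you spell out explicitly --- that the factor $n^{1-k}$ with $1-k \geq 1$ kills the constant term of $g = D^{1-k}h$, which is what licenses the application of Proposition \ref{Proposition - regularized bilinear pairing} --- is the right (and only) piece of glue the paper leaves implicit.
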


\section{\bf{Proof of Theorem \ref{Theorem - Zagier duality for vector
valued modular forms}}}

Let $\alpha, \beta \in L'/L$ and $m \in \mathbb Z - Q(\alpha)$, $n
\in \mathbb Z + Q(\beta)$ with $m, n > 0$. Then by Proposition
\ref{Proposition - prescribed principal part} there exist
$f_{\alpha, m} \in H_{k, \rho_L}$ and $h_{\beta, n} \in H_{k,
\bar{\rho}_L}$ such that
\begin{eqnarray*}
f_{\alpha, m}(\tau) & = & f_{\alpha, m}^-(\tau) + q^{-m} \mathfrak
e_\alpha + (-1)^{k + \frac{b^- - b^+}{2}} q^{-m} \mathfrak
e_{-\alpha} + \sum_{\gamma \in L'/L} \sum_{l \in \mathbb Z +
Q(\gamma) \atop l
\geq 0} c_{f_{\alpha, m}}^+(\gamma, l) q^l \mathfrak e_\gamma, \\
h_{\beta, n}(\tau) & = & h_{\beta, n}^-(\tau) + q^{-n} \mathfrak
e_\beta + (-1)^{k + \frac{b^+ - b^-}{2}} q^{-n} \mathfrak
e_{-\beta} + \sum_{\gamma \in L'/L} \sum_{l \in \mathbb Z -
Q(\gamma) \atop l \geq 0} c_{h_{\beta, n}}^+(\gamma, l) q^l
\mathfrak e_\gamma.
\end{eqnarray*}
By Proposition \ref{Proposition - from Bol's identity} the
$g_{\beta, n} := (-n)^{k - 1} D^{1 - k} h_{\beta, n} \in M_{2 - k,
\bar{\rho}_L}^!$ has a Fourier expansion of the form
\[ g_{\beta, n}(\tau)  = q^{-n} \mathfrak e_\beta + (-1)^{k + \frac{b^+ - b^-}{2}}
q^{-n} \mathfrak e_{-\beta} + \sum_{\gamma \in L'/L} O(q^\nu)
\mathfrak e_\gamma \] for some $\nu > 0$. Now Theorem \ref{Theorem -
Zagier duality for vector valued modular forms} follows from Theorem
\ref{Theorem - a general form of Zagier duality}.

\section{\bf{Proof of Proposition \ref{Proposition - isomorphism for integral
weight case}}}

By the same argument as in \cite{BB} one can see that $F$ satisfies
the desired modular transformation property. To check $\Delta_k F =
0$ we recall from \cite[Proposition 2]{BB} that
\[ F = i^{\frac{b^+ - b^-}{2}} p^{\frac{k - 1}{2}} \sum_{M \in \Gamma_0(p)
\backslash \SL_2(\mathbb Z)} (\rho_L(M)^{-1} \mathfrak e_0) f|_k W_p
|_k M, \] where $W_p := (\begin{smallmatrix} 0 & -1 \\ p & 0
\end{smallmatrix})$ is the Fricke involution. Since $\Delta_k$
commutes with the Petersson slash operator (see \cite{Ma}), we have
$\Delta_k F = 0$. For the converse we recall from \cite[Lemma 1]{BB}
that the inverse isomorphism $F \mapsto f$ is given by
\[ f = \frac{i^{\frac{b^+ - b^-}{2}}}{2} p^{\frac{1 - k}{2}} F_0 |_k W_p. \]
Hence $\Delta_k f$ vanishes.

\bibliographystyle{amsplain}

\end{document}